\newcommand{\nrn}{\rightarrow+\infty}
\newcommand{\xrn}{\xrightarrow}
\newcommand{\gam}{\gamma_{n+1}}
\newcommand{\ER}{\mathbb {R}}\newcommand{\EN}{\mathbb {N}}
\newcommand{\PE}{\mathbb {P}}
\newcommand{\ES}{\mathbb{E}}
\newcommand{\guil}{\textquotedblleft}
\newcommand{\psg}{\langle }
\newcommand{\psd}{\rangle }
\newtheorem{theorem}{ \textnormal{\bf{T\scriptsize{HEOREM}}}}
\newtheorem{prop}{\textnormal{\bf{P\scriptsize{ROPOSITION}}}}
\newtheorem{lemme}{\textnormal{\bf{L\scriptsize{EMMA}}}}
\theoremstyle{definition}
\theoremstyle{remark}
\newtheorem{Remarque}{\textnormal{\bf{R\scriptsize{EMARK}}}}
\author{Fabien Panloup\footnote{
    Institut de Mathématiques de Toulouse, LSP, Université Paul Sabatier. E-mail: \texttt{fpanloup@insa-toulouse.fr}. Postal adress: Département GMM, INSA Toulouse, 135, Avenue de Rangueil, 31077 TOULOUSE Cedex 4.}}
\title{\textbf{A connection between extreme value theory and long time approximation of SDE's}}
\begin{document}
\maketitle
\begin{abstract} We consider a sequence $(\xi_n)_{n\ge1}$ of $i.i.d.$ random values  living in the domain of attraction of an extreme value distribution. For such sequence, there
exists $(a_n)$ and $(b_n)$, with $a_n>0$ and $b_n\in\ER$ for every
$n\ge 1$, such that the sequence $(X_n)$ defined by $X_n=(\max(\xi_1,\ldots,\xi_n)-b_n)/a_n$
converges in distribution to a non degenerated distribution.\\
In this paper, we show that $(X_n)$ can be viewed as an Euler scheme with
decreasing step of an ergodic Markov process solution to a SDE
with jumps and we derive a functional limit theorem for the sequence
$(X_n)$ from some methods used in the long time numerical approximation of ergodic SDE's.
\end{abstract}
\noindent \textit{Keywords}: stochastic differential equation ;
jump process ; {invariant distribution}\mbox{ ;} Euler scheme ;
extreme value.\\
\noindent \textit{AMS classification (2000)}: 60G10, 60G70,60J75, 65D15.
\section{\large{Introduction}}
Let $(\xi_n)_{n\ge1}$ be a sequence of $i.i.d.$ random values with
common distribution function $F$. Set
$M_n:=\max(\xi_1,\ldots,\xi_n)$ and let $G$ denote one of the extreme
values distribution functions:
\begin{align*}
&\Lambda(x)=\exp(-e^{-x}),&x\in\ER,\\
&\Phi_\alpha(x)=\exp(-x^{-\alpha})1_{x>0}&\alpha>0,\\
&\Psi_\alpha(x)=\exp(-(-x)^\alpha)\wedge 1,&\alpha>0.
\end{align*}
One says that $F$ is in the domain of attraction of one of the preceding
extreme values distributions if there exist some sequences $(a_n)$ and
$(b_n)$, with $a_n>0$ and $b_n\in\ER$ for every $n\ge1$, such that
$((M_n-b_n)/a_n)_{n\ge1}$ converges in distribution to $G$ (see
\cite{leadbetter}, \cite{resnick} and Proposition \ref{rappelgne} for
background on extreme value theory).\\
In \cite{lamperti}, Lamperti obtained a functional version of this
result which is analogous to the  Donsker theorem for sums of
independent variables. More precisely, denoting by $(Y^{(n)})$ the
sequence of c\`adl\`ag processes defined by
\begin{equation*}
Y^{(n)}_t=\begin{cases}(M_{[nt]}-b_n)/a_n&\textnormal{if
$t\ge1/n$}\\
(\xi_1-b_n)/a_n&\textnormal{if $0\le t<1/n$},
\end{cases}
\end{equation*}
he proved that $(Y^{(n)})_{n\ge1}$  converges weakly on
$\mathbb{D}(\ER_+,\ER)$ (that denotes the space of càdlàg functions on $\ER_+$ with values in $\ER$ endowed with the Skorokhod topology) to a  c\`adl\`ag  process $Y$ called extremal
process (see also \cite{resnick2}).\\
The aim of this paper is to obtain another functional limit theorem
for the sequence $((M_n-b_n)/a_n)_{n\ge1}$ by  using that
$((M_n-b_n)/a_n)_{n\ge1}$ can be viewed as an approximation of an
Euler scheme with decreasing step of an ergodic Markov process
solution to a SDE with jumps. \\
The motivation is then twofold: on the one hand, we wish to connect  the theory of long time discretization of SDE's and extreme value theory and on the other hand, we want to exhibit another functional asymptotic behavior of extremes of i.i.d random sequences.\\
Set $X_n:=(M_n-b_n)/a_n$. Then, the sequence
$(X_n)$ can be recursively written as follows: $X_1=\xi_1$ and for
every $n\ge1$,
\begin{equation*}
X_{n+1}=\frac{a_n}{a_{n+1}}{X}_n+\frac{b_n-b_{n+1}}{a_{n+1}}+\frac{a_n}{a_{n+1}}
\Big(\frac{\xi_{n+1}}{a_n}-{X}_n-\frac{b_n}{a_n}\Big)_+.
\end{equation*}
For every $n\ge1$, we set $\theta_n:=\inf\{x, F(x)\ge 1-1/n\}$ and
$\gamma_n:=1-F(\theta_n)$. Note that $\gamma_n=1/n$ if $F$ is
continuous and that $(\gamma_n)_{n\ge1}$ is a nonincreasing
sequence. Then, we denote by $(\rho_n)_{n\ge1}$ and
$(\beta_n)_{n\ge1}$ the sequences defined by
$\rho_n:=\frac{a_{n-1}-a_n}{a_n\gamma_n}$ and
$\beta_n:=\frac{b_{n-1}-b_{n}}{a_{n}\gamma_n}$. With these
notations, we have for every $n\ge1$,
\begin{equation}\label{convloischema}
{X}_{n+1}={X}_n+\gamma_{n+1}(\rho_{n+1}
{X}_n+\beta_{n+1})+(1+\rho_{n+1}\gamma_{n+1})\Big(\frac{\xi_{n+1}}{a_n}-{X}_n-\frac{b_n}{a_n}\Big)_+.
\end{equation}
Setting $\Gamma_n:=\sum_{k=1}^n\gamma_k$, we denote by
$({\cal X}^{(n)})_{n\ge 1}$,  the sequence of c\`adl\`ag
processes defined by
\begin{equation}\label{process}
 {\cal X}_t^{(n)}= {X}_{N(n,t)}\qquad
\textnormal{with}\quad N(n,t)=\inf\{k\ge n,
\Gamma_{k+1}-\Gamma_n> t\}.\end{equation} In particular,
${\cal X}^{(n)}_{0}={X}_n$. Finally, we set ${\cal
F}_n=\sigma({X}_1,\ldots,{X}_n)$.\\

\noindent In Equation \eqref{convloischema}, we try to write $X_{n+1}-X_n$ as the increment  of an Euler scheme with step $\gamma_n$ of a SDE (that we identify in the sequel). Then, the stepwise constant process ${\cal X}^{(n)}$ plays the role of a continuous-time version of the Euler scheme starting from time $\Gamma_n$.\\
At this stage, we can observe that two types of terms
appear in the right-hand member of Equation \eqref{convloischema}: the first one is close to the time discretization of  a drift term and the second one looks like 
a positive jump. This heuristic remark will be
clarified in Theorem \ref{maxprincipal}.\\
\section{Main Result}
The main result of this paper is Theorem \ref{maxprincipal}. In this result, we
show under some mild conditions that the sequence
$({\cal X}^{(n)})_{n\ge1}$  converges weakly to a stationary
Markov process for the Skorokhod topology on
$\mathbb{D}(\ER_+,\ER)$. In this way, we first need to recall the
 result  by Gnedenko which characterizes the domain of attraction of each extreme value distribution (see \cite{gnedenko}):
\begin{prop}\label{rappelgne}
Let $(\xi_n)$ be a sequence of $i.i.d.$ real-valued random
variables with distribution function $F$ and set $x_F:=\sup\{x,
F(x)<1\}$. Then, there exists $(a_n)$ and $(b_n)$ such that
$((M_n-b_n)/a_n)$ converges weakly to a random variable with a
non-degenerated distribution function
$G$ if and only if one of the following conditions is fulfilled :\\
\noindent $\bullet$ Type 1 : There exists a positive function $g$
such that
\begin{equation}\label{lambdaproperty}
 \frac{1-F(t+x g(t))}{1-F(t)}\xrightarrow{t\nearrow
x_F}
 -\ln(\Lambda(x))\qquad\forall x\in\ER.
 \end{equation}
\noindent In this case, $G(x)=\Lambda(x)$ for every $x\in \ER$ and the
norming constants $a_n$ and $b_n$ may be chosen as
$a_n=g(\theta_n)$ and $b_n=\theta_n.$\\
$\bullet$ Type $(2,\alpha)$ : $x_F=+\infty$ and for every $x>0$
 $$ \frac{1-F(tx )}{1-F(t)}\xrightarrow{t\rightarrow
 +\infty}-\ln(\Phi_\alpha(x))\qquad \textnormal{where }\alpha >0.$$
\noindent In this case, $G(x)=\Phi_\alpha(x)$ and the
norming constants $a_n$ and $b_n$ may be chosen as $a_n=\theta_n$
and $b_n=0$.\\
 $\bullet$ Type $(3,\alpha)$ : $x_F<+\infty$ and for every $x<0$,
 $$ \frac{1-F(x_F+xt )}{1-F(x_F-t)}\xrightarrow{t\nearrow
 x_F}-\ln(\Psi_\alpha(x))\qquad \textnormal{with }\alpha >0.$$
\noindent In this case, $G(x)=\Psi_\alpha(x)$ and the norming constants
$a_n$ and $b_n$ may be chosen as $a_n=x_F-\theta_n$ and $b_n=x_F$.
\end{prop}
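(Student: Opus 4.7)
This is the classical Fisher--Tippett--Gnedenko theorem, so the plan is to follow the standard two-stage strategy: first identify the possible shapes of $G$ via a functional equation, then characterize, for each type, the corresponding tail condition on $F$. The basic reformulation driving the whole argument is that at any continuity point $x$ of $G$ with $G(x)>0$, the convergence $F^n(a_n x+b_n)\to G(x)$ is equivalent to $n\bar F(a_n x+b_n)\to-\ln G(x)$, where $\bar F:=1-F$; this follows at once from $\log(1-u)=-u+o(u)$ once one observes that $\bar F(a_n x+b_n)\to 0$. This reformulation is the bridge between the distributional statement and the analytic tail conditions.

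To identify $G$, I would replace $n$ by $[ns]$ for fixed $s>0$ in the initial convergence, obtaining simultaneously $F^{[ns]}(a_{[ns]}\cdot+b_{[ns]})\to G$ and $F^{[ns]}(a_n\cdot+b_n)\to G^s$. The convergence-of-types lemma then supplies constants $\alpha_s>0$ and $\beta_s\in\ER$ with $a_{[ns]}/a_n\to\alpha_s$, $(b_{[ns]}-b_n)/a_n\to\beta_s$, and the max-stability relation $G^s(x)=G(\alpha_s x+\beta_s)$ for every $s>0$. Studying the regularity of $s\mapsto(\alpha_s,\beta_s)$ and applying a Cauchy-type argument to $\log(-\log G)$ then yields, up to affine renormalization, exactly the three distributions $\Lambda$, $\Phi_\alpha$, $\Psi_\alpha$ listed in the statement.

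To pass from the shape of $G$ to the claimed tail behaviour of $F$, I would plug the explicit form of $-\log G$ into $n\bar F(a_n x+b_n)\to-\log G(x)$ and replace the discrete index $n$ by a continuous parameter $t\nearrow x_F$. This replacement exploits the monotonicity of $\bar F$ together with the definition of $\theta_n,\gamma_n$, controlled by the sandwich $\bar F(\theta_n)\le 1/n\le\bar F(\theta_n^-)$, which also absorbs the atoms of $F$. In the Fr\'echet type the result is that $\bar F$ is regularly varying at infinity with index $-\alpha$; in the Weibull type the substitution $t\mapsto x_F-1/t$ reduces the statement to the previous one applied to the reflected law; and in the Gumbel type one produces the auxiliary function $g$ explicitly through the von Mises representation $g(t)=\int_t^{x_F}\bar F(u)\,du/\bar F(t)$ and checks that it delivers \eqref{lambdaproperty}.

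The converse direction is an almost mechanical verification: given one of the three tail conditions and the stated choices $a_n,b_n$, one checks that $n\bar F(a_n x+b_n)\to-\log G(x)$ and concludes by the initial reformulation. The main obstacle of the whole proof is clearly the Gumbel case of the forward direction, because \eqref{lambdaproperty} is a \emph{uniform} statement in $x$ that cannot be obtained from a single pointwise convergence; exhibiting the correct auxiliary function $g$ and proving that the limit holds for every $x\in\ER$ requires the full strength of von Mises' argument, whereas the Fr\'echet and Weibull cases reduce to Karamata's theorem on regularly varying functions and are comparatively soft.
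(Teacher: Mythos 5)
The paper does not prove this proposition at all: it is recalled verbatim from Gnedenko's 1943 theorem (the paper cites \cite{gnedenko} and \cite{resnick} and uses the statement as a black box), so there is no internal proof to compare yours against. Your outline is the standard classical route found in those references --- the reformulation $F^n(a_nx+b_n)\to G(x)\Longleftrightarrow n\bar F(a_nx+b_n)\to-\ln G(x)$ at continuity points with $G(x)>0$, the convergence-of-types argument giving max-stability and hence the three limit shapes, Karamata/regular variation for the Fr\'echet and (by reflection) Weibull domains, and the von Mises auxiliary function $g(t)=\int_t^{x_F}\bar F(u)\,du/\bar F(t)$ for the Gumbel domain --- and it is correct as a plan. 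The one place where your sketch is a pointer rather than an argument is exactly the one you flag: the forward Gumbel direction, i.e.\ deducing the continuous-parameter limit \eqref{lambdaproperty} (and the integrability of $\bar F$ near $x_F$ needed to define $g$) from the discrete convergence; that step carries essentially all the difficulty of Gnedenko's theorem, and in a complete write-up it would need the full $\Gamma$-variation/representation machinery you allude to rather than a \guil one checks".
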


\noindent In the sequel, we will denote by $\nu_G$ the probability associated with $G$ and by $\tau_G$ the limiting function that appears in Proposition \ref{rappelgne}:
\begin{equation}\label{tauf}
\tau_G(x)=\begin{cases}\exp(-x)&\textnormal{if $F$ is of type 1,}\\
x^{-\alpha}1_{x>0}&\textnormal{if $F$ is of type $(2,\alpha)$}\\
(-x)^{\alpha}1_{x\le0}&\textnormal{if $F$ is of type $(3,\alpha)$}
\end{cases}
\end{equation}
We will also denote by ${\cal C}^1_K(D_G)$  the set of ${\cal C}_1$-functions with compact support in $D_G$ with 
\begin{equation*}D_G=\begin{cases}\ER&\textnormal{if $F$ is of type 1,}\\
(0,+\infty)&\textnormal{if $F$ is of type $(2,\alpha)$,}\\
(-\infty,0]&\textnormal{if $F$ is of type $(3,\alpha)$.}
\end{cases}
\end{equation*}
Finally, for a distribution function $F$ of type 1, $(2,\alpha)$ or $(3,\alpha)$, ${\cal C}(F)$ will correspond to the set of sequences
$(a_n,b_n)_{n\ge1}$ (with $a_n>0$ and $b_n\in\ER$) such that, for every sequence $(\xi_n)_{n\ge1}$
of $i.i.d.$ random variables with common distribution function $F$, $\left((\max(\xi_1,\ldots\xi_n)-b_n)/a_n\right)_{n\ge1}$ converges
weakly to $\nu_G$. We recall that if  $(a_n,b_n)_{n\ge1}\in {\cal C}(F)$, then,
$(\tilde{a}_n,\tilde{b}_n)_{n\ge1}\in {\cal C}(F)$ if and only if,
\begin{equation}\label{choixanbn}
a_n\overset{n\rightarrow+\infty}{\sim}\tilde{a}_n\quad\textnormal{and}\quad
\frac{b_n-\tilde{b}_n}{a_n}\xrn{n\nrn}0\qquad \textnormal{(see $e.g.$
\cite{resnick}, Proposition 0.2).}
\end{equation}
 
\noindent Now, we introduce  $\mathbf{H_\Lambda^1}(F)$ and $\mathbf{H_\Lambda^2}(F)$
which are some assumptions that are needed for Theorem \ref{maxprincipal} when $F$ is of type 1:\\

\noindent $\mathbf{H_\Lambda^1}(F):$ There exists $(a_n,b_n)_{n\ge1}\in{\cal C}(F)$ such that
$(\rho_n)_{n\ge1}$ and $(\beta_n)_{n\ge1}$ converge to some finite values.\\ 

\noindent $\mathbf{H_\Lambda^2}(F):$ There exists a function $g$ satisfying \eqref{lambdaproperty}, a positive number $\lambda$  and $\delta\in(-\infty,x_F)$ such that for sufficiently large $n$,
$$\int_u^0\frac{g(\theta_n)}{g(\theta_n+g(\theta_n)v)}dv\le -\lambda u\quad \textnormal{for every}\;  u\in\left(\frac{\delta-\theta_n}{g(\theta_n)},0\right).$$
 \begin{Remarque} \label{remarkong} 
Assumption $\mathbf{H_\Lambda^1}(F)$ is only needed when $F$ is of type 1 since it is always satisfied in the other cases. Actually, when $F$ is of type $(2,\alpha)$ or $(3,\alpha)$, one can build  $(a_n,b_n)_{n\ge1}\in{\cal C}(F)$ such that the pair of sequences $(\rho_n,\beta_n)_{n\ge1}$ converges to $(-1/\alpha,0)$ and $(1/\alpha,0)$ respectively (see Section \ref{resume} for details). We are not able to obtain such a general result when $F$ is of type 1 but one can check that $\mathbf{H_\Lambda^1}(F)$ is true in some standard cases: \\
\noindent $\bullet$ Exponential distribution: we can take $ a_n=1$ and
$b_n=\log
n$. Then, $\rho_n=0,$ and $\beta_n\xrightarrow{n\rightarrow+\infty} -1.$\\

\noindent $\bullet$ Normal distribution: $a_n=\frac{1}{\sqrt{2\log n}}$ and $b_n=\sqrt{2\log n}-\frac{1}{ 2\sqrt{\log n}}(\log\log n+\log 4\pi)$ belong to ${\cal C}(F)$. For these choices,  $\rho_n\xrightarrow{n\rightarrow+\infty}0$ and $\beta_n\xrightarrow{n\rightarrow+\infty} -1.$ \\
Note that the limits of $(\rho_n)_{n\ge1}$ and $(\beta_n)_{n\ge1}$ correspond to $\rho_G$ and $\beta_G$ defined in Theorem \ref{maxprincipal}. In particular, they depend only 
on the type of $F$ in the sense that if  $(\rho_n)_{n\ge1}$ and $(\beta_n)$ converge, then, the limits are systematically $\rho_G$ and $\beta_G$ respectively (see Section \ref{resume} for details).\\
Assumption  $\mathbf{H_\Lambda^2}(F)$ is needed to control the evolution of $(X_k)_{k\ge1}$ (see Lemma \ref{lemme2}).
First, using that for $g$ and
$\tilde{g}$  satisfying \eqref{lambdaproperty}, $g(t){\sim}\tilde{g}(t)$  as $t\nearrow x_F$, one observes that Assumption $\mathbf{H_\Lambda^2}(F)$ does not depend on the
choice of the function $g$. Second, $\mathbf{H_\Lambda^2}(F)$ is satisfied in the two preceding examples. Actually, for an exponential distribution, one can take $g(x)=1$ and for a normal distribution, one can show that $g(x)\overset{x\rightarrow+\infty}{\sim}x^{-1}$ (see $e.g.$ 
\cite{leadbetter}). Hence, for a sufficiently large $\delta$ and $n$,
$$\frac{g(\theta_n)}{g(\theta_n+g(\theta_n)v)}\le C\frac{g(\theta_n)v+\theta_n}
{\theta_n}\le C\quad \forall v\in\left(\frac{\delta-\theta_n}{g(\theta_n)},0\right).$$  More generally, one notices that $\mathbf{H_\Lambda^2}(F)$ is satisfied  as soon as, $g$ is bounded
and cut by a positive number or,  if there exists
$\delta<x_F$ such that $g$ is noincreasing on $[\delta, x_F)$.
\end{Remarque}

\noindent
Let us now state our main result:
\begin{theorem}\label{maxprincipal} Let  $F$ be a distribution function of type
1, $(2,\alpha)$ with $\alpha>2$ or $(3,\alpha)$ with $\alpha>0$ and let $G$ denote one of the extreme values distribution functions. Assume $\mathbf{H_\Lambda^1}(F)$ and $\mathbf{H_\Lambda^2}(F)$ if
 $F$ is of type 1. Then, for every $(a_n,b_n)_{n\ge1}\in{\cal C}(F)$, the sequence of c\`adl\`ag processes
 $({\cal X}^{(n)})_{n\ge 0}$ (defined in \eqref{process}) converges weakly, for the Skorokhod topology on $\mathbb{D}(\ER_+,\ER)$, to  a stationary Markov process with 
 invariant distribution  $\nu_G$ and infinitesimal generator ${\cal A}$ defined for every $f\in{\cal C}_K^1(D_G)$ by
$${\cal A}f(x)=(\rho_G
x+\beta_G)f'(x)+\int_0^{+\infty}\Big(f(x+y)-f(x)\Big)\varphi_G(x+y)dy $$
where $\varphi_G(x)=-\tau_G'(x)$ and $(\rho_G,\beta_G)$ satisfies
\begin{equation}\label{rhobeta}
(\rho_G,\beta_G)=\begin{cases}(0,-1)&\textnormal{if $F$ is of type 1,}\\
(-1/\alpha,0)&\textnormal{if $F$ is of type $(2,\alpha)$,}\\
(1/\alpha,0)&\textnormal{if $F$ is of type $(3,\alpha)$.}
\end{cases}
\end{equation}
\end{theorem}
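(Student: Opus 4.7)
The plan is the standard tightness-plus-martingale-problem strategy on $\mathbb{D}(\mathbb{R}_+,\mathbb{R})$. For every $f\in\mathcal{C}_K^1(D_G)$, I will first establish pointwise convergence of the discrete generators
\[
A_nf(x):=\gamma_{n+1}^{-1}\mathbb{E}\bigl[f(X_{n+1})-f(X_n)\mid X_n=x\bigr]\xrightarrow[n\to\infty]{}\mathcal{A}f(x),
\]
then combine it with tightness of $(\mathcal{X}^{(n)})$ to identify every weak limit as a solution to the martingale problem for $\mathcal{A}$ started from $\mathcal{X}^{(n)}_0=X_n\Rightarrow\nu_G$. Well-posedness of this martingale problem and invariance of $\nu_G$ under $\mathcal{A}$ then yield weak convergence to the stationary Markov process of the statement.

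The key algebraic step is the convergence $A_nf\to\mathcal{A}f$. Using \eqref{convloischema} and $\mathbf{H_\Lambda^1}(F)$, the "continuous" (no-jump) part of the recursion contributes $(\rho_Gx+\beta_G)f'(x)$ by Taylor expansion. For the jump part, conditioning on $X_n=x$ and changing variable via $\xi_{n+1}=b_n+a_n(x+y)$ yields
\[
\int_0^{+\infty}\bigl(f(x+c_ny)-f(x)\bigr)\,a_nF'(b_n+a_n(x+y))\,dy,\qquad c_n:=1+\rho_{n+1}\gamma_{n+1}.
\]
Since $f$ has compact support, integration by parts (with vanishing boundary terms, because $f(x+c_n\cdot)-f(x)$ vanishes at zero while $1-F(b_n+a_n(x+\cdot))$ vanishes at infinity) transforms it into $c_n\int_0^{+\infty}f'(x+c_ny)(1-F(b_n+a_n(x+y)))\,dy$. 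Dividing by $\gamma_{n+1}$ and invoking the Gnedenko convergence $(1-F(b_n+a_nz))/\gamma_n\to\tau_G(z)$ together with $\gamma_{n+1}/\gamma_n\to 1$, $c_n\to 1$ and dominated convergence on the compact support of $f'$, produces the limit $\int_0^{+\infty}f'(x+y)\tau_G(x+y)\,dy$. One last integration by parts, using $\tau_G(+\infty)=0$ and $\varphi_G=-\tau_G'$, puts this in the advertised form $\int_0^{+\infty}(f(x+y)-f(x))\varphi_G(x+y)\,dy$.

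Tightness of $(\mathcal{X}^{(n)})$ will be obtained from Aldous' criterion. One-dimensional tightness of $(\mathcal{X}^{(n)}_t)$ at each fixed $t$ requires a uniform-in-$n$ control of $X_n$: in type $(2,\alpha)$ with $\alpha>2$, the dissipativity $\rho_G=-1/\alpha<0$ combined with the square-integrability of the jump increments yields $\sup_n\mathbb{E}[X_n^2]<\infty$; in type 1, this role is played by $\mathbf{H_\Lambda^2}(F)$ via Lemma \ref{lemme2}, which prevents $X_n$ from drifting to $-\infty$; in type $(3,\alpha)$, the bound is automatic from the constraint $D_G=(-\infty,0]$. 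Control of oscillations then comes from the discrete martingale decomposition $f(X_{k+1})-f(X_k)=\gamma_{k+1}A_kf(X_k)+\Delta M^f_{k+1}$ and Doob's inequality applied to $M^f$, whose predictable bracket is dominated by the same uniform moment estimate.

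Given tightness and $A_nf\to\mathcal{A}f$, any subsequential weak limit of $(\mathcal{X}^{(n)})$ solves the martingale problem for $(\mathcal{A},\mathcal{C}_K^1(D_G))$, whose well-posedness follows from the SDE interpretation of $\mathcal{A}$ as a Lipschitz-drift process perturbed by a state-dependent positive Poisson jump measure with locally bounded intensity $\varphi_G(x+y)\,dy$. Invariance of $\nu_G$ is a direct check: the density $\nu_G(dx)=\varphi_G(x)e^{-\tau_G(x)}\,dx$ satisfies $\int\mathcal{A}f\,d\nu_G=0$ for every $f\in\mathcal{C}_K^1(D_G)$ by a further integration by parts. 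Since $\mathcal{X}^{(n)}_0=X_n\Rightarrow\nu_G$, the limit starts in its stationary regime, completing the proof. The main obstacle I anticipate is the tightness step: the interplay between the time-inhomogeneous step $\gamma_n$, unbounded positive jumps, and the dissipativity needed near the boundary of $D_G$ is the reason for the delicate Lyapunov control encoded in $\mathbf{H_\Lambda^2}(F)$ and Lemma \ref{lemme2}.
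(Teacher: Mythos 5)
Your overall strategy (tightness plus identification of weak limits as solutions of a well-posed martingale problem, followed by uniqueness via the SDE representation) is the same as the paper's, and several steps match it closely: the integration by parts turning the jump contribution into $\int_0^{+\infty}f'(x+y)\tau_G(x+y)\,dy$ via the Gnedenko convergence $(1-F(a_nz+b_n))/\gamma_n\to\tau_G(z)$, the reduction of well-posedness to pathwise uniqueness for the jump SDE, and the use of $X_n\Rightarrow\nu_G$ to get stationarity. But there is a genuine gap in the type-1 case. You assert that the drift part of the recursion contributes $(\rho_Gx+\beta_G)f'(x)=(0\cdot x-1)f'(x)$ ``by Taylor expansion''; however $\mathbf{H_\Lambda^1}(F)$ only guarantees that $(\rho_n,\beta_n)$ converges to \emph{some} finite pair $(\rho,\beta)$, and nothing in your argument identifies this pair as $(0,-1)$. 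As written, your scheme proves convergence to a process with generator $\mathcal{A}_\rho^\beta$ for an unidentified $(\rho,\beta)$, which is weaker than the theorem. The paper closes this by noting that $\nu_\Lambda$ must be invariant for $\mathcal{A}_\rho^\beta$, so that $(\rho,\beta)$ satisfies a linear relation $\rho\int xf'\,d\nu_\Lambda+(\beta+1)\int f'\,d\nu_\Lambda=0$ for every test function $f$ (using that $(0,-1)$ is one solution), and then constructs an explicit $f_{t_0}$ with $\int f_{t_0}'\,d\nu_\Lambda=0$ and $\int xf_{t_0}'\,d\nu_\Lambda<0$ to force $\rho=0$ and hence $\beta=-1$. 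Your direct verification that $\int\mathcal{A}_0^{-1}f\,d\nu_\Lambda=0$ would supply the ``one solution'' part, but some uniqueness argument of this kind is still indispensable.

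Two smaller omissions. First, the theorem claims convergence for \emph{every} $(a_n,b_n)\in{\cal C}(F)$, while your generator computation requires $(\rho_n,\beta_n)$ to converge, which $\mathbf{H_\Lambda^1}(F)$ provides only for one choice of normalizing constants; you need the transfer step (the paper's point (iii)): for two admissible normalizations, $X_n-\tilde X_n=\varepsilon_1(n)X_n+\varepsilon_2(n)$ with $\varepsilon_i(n)\to0$, which together with tightness gives uniform closeness of the two processes on compact time intervals. Second, the uniform moment bounds your tightness argument leans on (for instance $\sup_n\ES[e^{-\lambda X_n}]<+\infty$ in type 1, or negative moments in type $(2,\alpha)$) are generally false for the original $F$ and are obtained only after replacing $F$ by its truncation $F_\delta$; this replacement is legitimate because of the coupling argument of Lemma \ref{lemme1}, and it should be made explicit rather than absorbed into a heuristic appeal to dissipativity.
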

\noindent 
With the time discretization standpoint adopted in this paper, Theorem
\ref{maxprincipal} is close to that obtained in  \cite{basak} where the authors show a similar functional weak convergence
result for the Euler scheme with decreasing step of ergodic Brownian diffusions. Then, if $(X_n)$ was a \guil true" Euler scheme
for the Markov process with infinitesimal generator ${\cal A}$, Theorem \ref{maxprincipal} would be only an adaptation
of \cite{basak} to this type of SDE's with jumps.
\begin{Remarque} In the literature about the numerical approximation of the stationary regime of Markovian SDE's, another 
type of result could also be connected to extreme value theory. Actually, in \cite{LP1,LP2}, the authors show the $a.s.$ weak convergence of some weighted occupation measures of the Euler scheme of Brownian SDE's (see also \cite{lemaire} and \cite{panloup} for extensions). By adapting the approach of these papers to this context, it could be possible to retrieve the $a.s.$ CLT for extreme values obtained in \cite{fahrner}.
\end{Remarque}
\begin{Remarque} The reader can observe that we assume $\alpha>2$. This assumption can be viewed as a consequence of martingale methods in which the convergence needs some control of the moments. However, this assumption could be alleviated  and it seems that at the price of technicalities, the result still holds if we only assume that $\alpha>1$.   
\end{Remarque}
\noindent The proof of Theorem \ref{maxprincipal} is divided in three parts. First, in Section
 \ref{section1}, we establish some stability properties for the sequence $(X_k)_{k\ge1}$ and obtain the tightness of the sequence  $({\cal X}^{(n)})_{n\ge 1}$ on
$\mathbb{D}(\ER_+,\ER)$. Second, in Section \ref{identifilimit}, we identify the limit by showing that every weak limit  of $({\cal X}^{(n)})_{n\ge 1}$ is a solution to a martingale problem. Finally, Section \ref{resume} is devoted to some details, to a synthesis of the two preceding parts and to the uniqueness of the martingale problem.\\
Before going further, let us precise several notations of the proof. For a non-decreasing function $g$, we denote by $g^{\leftarrow}$, its left continuous inverse defined by $g^{\leftarrow}(x)=\inf\{y,g(y)\ge x\}$. Throughout  the proof, $C$ will denote a constant which may change from line to line.

\section{Tightness of the sequence $({\cal X}^{(n)})_{n\ge
1}$}\label{section1}

The main result of this section is Proposition \ref{tight} where
we obtain that the sequence $({\cal X}^{(n)})_{n\ge 1}$ is
tight on $\mathbb{D}(\ER_+,\ER)$. Before stating it, we
need to establish a series of technical lemmas.\\
In  Lemma \ref{lemme1}, we show that for every $\delta< x_F$, we
can suppose that $F(x)=0$ for every $x\in(-\infty,\delta)$. This
assumption will be convenient for the sequel of the proof.
\begin{lemme} \label{lemme1} Let $F$ be a distribution function and $\delta$ be a real number
such that $\delta<x_F$. Denote by $F_\delta$ the distribution
function defined by $F_\delta(x)=F(x)1_{\{x\ge\delta\}}$. Let
$({\cal X}^{(n)})$ and $({\cal X}^{(n,\delta)})$ denote
some sequences of c\`adl\`ag processes built as in Equation
\ref{process} and corresponding to $F$ and $F_\delta$ respectively.
Then, for every bounded continuous functional
$H:\mathbb{D}(\ER_+,\ER)\rightarrow\ER$,
\begin{equation}\label{hequ}
\ES[H({\cal X}^{(n)})-H({\cal X}^{(n,\delta)})]\xrightarrow{n\rightarrow+\infty}0.
\end{equation}
\end{lemme}
\begin{proof} Let $(\xi_n)_{n\ge 1}$ and $(\xi_n^\delta)_{n\ge 1}$ denote some sequences of $i.i.d.$ random
variables with distribution function $F$ and $F_\delta$
respectively. Since this lemma only depends on the distribution of
these sequences, we can assume that  $(\xi_n)_{n\ge 1}$ and
$(\xi_n^\delta)_{n\ge 1}$ are built as follows :
$\xi_n=F^{\leftarrow}(U_n)$ and $\xi_n^\delta=F_\delta^{\leftarrow}(U_n)$ where
$(U_n)_{n\ge 1}$ is a sequence of $i.i.d.$ random variables such
that $U_1\sim {\cal U}_{[0,1]}$. In particular, we have
$\xi_n=\xi_n^\delta$ on the event $\{U_n>F(\delta)\}$. It follows
that ${\cal X}^{(n)}={\cal X}^{(n,\delta)}$ on the event
$A_n=\bigcup_{k=1}^n\{U_k>F(\delta)\}$. The sequence $(A_n)_{n\ge
1}$ is non-decreasing and $\PE[\lim_{n\rightarrow+\infty}
A_n]=1$ since $\PE[U_1>F(\delta)]>0$. Assertion \eqref{hequ} easily follows.
\end{proof}
\noindent The aim of the following lemma is to obtain a recursive control of
the conditional moments of order 1 and 2 for the last term of
Equation \eqref{convloischema}. Keeping in mind Lemma
\ref{lemme1}, we show that this control is possible at the price of a potentially restriction of the support of $F$.
\begin{lemme} \label{lemme2} Let $F$ be a distribution function  of type 1, $(2,\alpha)$ with $\alpha>2$ or
$(3,\alpha)$ with $\alpha>0$ and let $(a_n,b_n)_{n\ge1}\in{\cal C}(F)$. Assume furthermore that Assumption
$\mathbf{H_\Lambda^2}(F)$ holds (if  $F$ is of type 1). Then,
for every  $\varepsilon>0$ and, $\varepsilon<\alpha-2$ if $F$ is
of type 2, there exists $\delta_\varepsilon<x_F$ such that  for
every $\delta\in[\delta_\varepsilon,x_F[$, for every sequence
$(\xi_k)_{k\ge1}$ of $i.i.d.$ random variables with distribution
function $F_\delta$ (defined by
$F_\delta(x)=F(x)1_{\{x\ge\delta\}}$), there exists
$n_\epsilon\in\EN$ such that for every  $k\ge n_\epsilon$,\\

\noindent$(i)$
\begin{eqnarray*}
\ES\big[\big(\frac{\xi_{k+1}}{a_k}-{X}_{k}-\frac{b_k}{a_k}\big)_+/{\cal
F}_k\big]\le C\gamma_{k+1}\begin{cases}
\big(1+e^{-\lambda{X}_{k}}\big)\quad
(\lambda>0)&\textnormal{ if $F$ is of type 1,} \\
\big(1+|{X}_{k}|^{-(\alpha+\varepsilon)+1}\big)&\textnormal{ if $F$ is of type $(2,\alpha)$,} \\
\big(1+|{X}_{k}|^{\alpha+\varepsilon+1}\big)&\textnormal{ if
$F$ is of type $(3,\alpha)$}.
 \end{cases}
 \end{eqnarray*}
$(ii)$
\begin{eqnarray*}
 \ES\big[\big(\frac{\xi_{k+1}}{a_k}-{X}_{k}-\frac{b_k}{a_k}\big)^2_+/{\cal
F}_k\big]\le C\gamma_{k+1}
\begin{cases}
\big(1+e^{-\lambda{X}_{k}}\big)\quad
(\lambda>0)&\textnormal{ if $F$ is of type 1,} \\
\big(1+|{X}_{k}|^{-(\alpha+\varepsilon)+2}\big)&\textnormal{ if $F$ is of type $(2,\alpha)$,} \\
\big(1+|{X}_{k}|^{\alpha+\varepsilon+2}\big)&\textnormal{ if
$F$ is of type $(3,\alpha)$}.
 \end{cases}
 \end{eqnarray*}
\end{lemme}
\begin{proof} $(i)$ Using \eqref{choixanbn}, it is easy to check that
$(i)$ and $(ii)$ hold for every sequence
$(a_n,b_n)_{n\ge1}\in{\cal C}(F)$  if and only if they hold for
a particular sequence $(a_n,b_n)_{n\ge1}\in{\cal C}(F)$. Hence, we only prove this
lemma with $(a_n,b_n)$ chosen as in Proposition \ref{rappelgne}.\\
We have:
\begin{equation}\label{mom1}
\ES\big[\big(\frac{\xi_{k+1}}{a_k}-{X}_k-\frac{b_k}{a_k}\big)_+/{\cal
F}_k\big]=\int_0^{+\infty}\big(1-F(a_k(u+{X}_k)+b_k)\big)du.
\end{equation}
\noindent Assume first that $F$ is  of type 1. We set
$(a_n,b_n)=(g(\theta_n), \theta_n)$ where $g$ is such that
\eqref{lambdaproperty} holds. By Proposition 1.4 p.43 of
\cite{resnick}, $g$ can be chosen such that the following
representation holds for every $x\in[z_0,x_F)$:
\begin{equation}\label{vonmises}
1-F(x)=c(x)\exp\big(-\int_{z_0}^{x}\frac{1}{g(s)}ds\big)\qquad
\textnormal{where}\quad c(x)\xrightarrow{x\nearrow x_F}c>0.
\end{equation}
Let $\delta$ be a real number such that $\delta\ge z_0$ and
$(\xi_n)$ be a sequence of $i.i.d.$  random variables with
distribution function $F_\delta$. For every positive $u$, we have
$a_k(u+{X}_k)+b_k> \delta$. Then, if
$a_k(u+{X}_k)+b_k<x_F$, we have for sufficiently large $k$,
\begin{align*}
\frac{1-F(a_k(u+{X}_k)+b_k)}{\gamma_k}&=\frac{1-F(a_k(u+{X}_k)+b_k)}{1-F(b_k)}\\
&=\frac{c(a_k(u+{X}_k)+b_k)}{c(b_k)}\exp\Big(-\int_{b_k}^{b_k+a_k(u+{X}_k)}\frac{1}{g(s)}ds\Big)\\
&=\frac{c(a_k(u+{X}_k)+b_k)}{c(b_k)}\exp\Big(-\int_0^{u+{X}_k}\frac{a_k}{g(a_k
s+b_k)}ds\Big).
\end{align*}
First, there exists $\delta_1\in[z_0,x_F[$ such that for every
$x\in[\delta_1, x_F[$, $c/2\le c(x)\le 2c$. Hence, for every
$\delta\ge\delta_1$ and for sufficiently large $k$,
$$\frac{c(a_k(u+{X}_k)+b_k)}{c(b_k)}\le C.$$
Second, one derives from Assumption $\mathbf{H_\Lambda^2}(F)$, one can find 
$\delta_2\in[z_0,x_F[$ such that for sufficiently large $k$,
$$\int_{u+{X}_k}^0\frac{a_k}{g(a_k
s+b_k)}ds\le -\lambda (u+{X}_k)\quad\textnormal{when
$u+{X}_k<0$}. $$
Thus, for $\delta>\delta_0=\delta_1\vee\delta_2$ and sufficiently large $k$,
\begin{equation}\label{trp1}
\frac{1-F(a_k(u+{X}_k)+b_k)}{\gamma_k}\le
C\exp(-\lambda(u+{X}_k)) \quad \forall u\in[0,-{X}_k\vee0).
\end{equation}  
Finally, if $u+{X}_k>0$, we derive from Lemma 2.2 p.78
of \cite{resnick} that, for every $\varepsilon>0$, there exists
$k_\varepsilon>0$ such that
$$\frac{a_k}{g(a_k s+b_k)}\ge \frac{1}{1+\varepsilon s}\qquad\forall
k\ge k_\varepsilon\quad\forall s>0.$$ Hence, for sufficiently large $k$,
\begin{equation}\label{trp2}
\frac{1-F(a_k(u+{X}_k)+b_k)}{\gamma_k}\le C_\varepsilon
\big(1+\varepsilon(u+{X}_k))^{-\frac{1}{\varepsilon}}\quad\forall u\in[0\vee -X_k,+\infty) .
\end{equation}
 Setting
$\varepsilon=1/2$ and $x^-=\max(-x,0)$, we derive from \eqref{mom1}, \eqref{trp1} and \eqref{trp2}
that, with a sufficient restriction of the support of $F$, there exists $k_0\in\EN$ such that for every $k\ge k_0$,
\begin{align*}
\ES\big[\big(\frac{\xi_{k+1}}{a_k}-{X}_k-\frac{b_k}{a_k}\big)_+/{\cal
F}_k\big]&\le
C\gamma_k\left(\int_0^{{X}_k^-}e^{-\lambda(u+{X}_k)}du
+\int_{{X}_k^-}^{+\infty}
\frac{1}{(1+\frac{1}{2}(u+{X}_k))^2}du\right)\\
&\le C\gamma_k\big(1+e^{-\lambda{X}_k}\big).
\end{align*}
Assume  now that $F$ is of type $(2,\alpha)$. By the  Karamata
representation (\cite{resnick}, p. 58), for every $x\ge 1$, we
have
\begin{equation}\label{karama}
1-F(x)=c(x) \exp\big(-\int_1^x\frac{\alpha(t)}{t}dt\big)
\end{equation}
with $c(x)\xrightarrow{x\rightarrow+\infty}c>0$ and
$\alpha(x)\xrightarrow{x\rightarrow+\infty}\alpha>0$. Then,
replacing $F$ by $F_1$ defined by $F_1(x)=F(x)1_{\{x\ge1\}}$, we
have  for every $u>0$ and sufficiently large $k$,
\begin{align*}
\frac{1-F(a_k(u+{X}_k)+b_k)}{\gamma_k}&=\frac{1-F(a_k(u+{X}_k))}{1-F(a_k)}\\
&=\frac{c(a_k(u+{X}_k))}{c(a_k)}\exp\left(-\int_{a_k}^{a_k(u+{X}_k)}\frac{\alpha(s)}{s}ds\right).
\end{align*}
Let $\varepsilon>0$. Let $\delta>1$ such that  $c/2\le c(x)\le 2c$
and $\alpha-\varepsilon\le \alpha(x)\le \alpha+\varepsilon$ for
every  $x\ge\delta$. Then, replacing $F_1$ by $F_\delta$, we
obtain that
\begin{equation}\label{eq1}
\frac{1-F(a_k(u+{X}_k)+b_k)}{\gamma_k}\le
C\Big((u+{X}_k)^{-\alpha+\varepsilon}1_{\{u+{X}_k>1\}}+
(u+{X}_k)^{-\alpha-\varepsilon}1_{\{u+{X}_k\le1\}}\Big).
\end{equation}
Hence, we derive from \eqref{mom1} that for
$\varepsilon\in]0,\alpha-1[$ and sufficiently large
$k$,
\begin{equation}
\ES\big[\big(\frac{\xi_{k+1}}{a_k}-{X}_k-\frac{b_k}{a_k}\big)_+/{\cal
F}_k\big]\le
C\gamma_k\Big(1+\frac{1}{{X}_k^{\alpha+\varepsilon-1}}\Big).
\end{equation}
Finally, assume  that $F$ is of type $(3,\alpha)$. By Corollary
1.14 p.62 of \cite{resnick}, $1-F$ can be written as follows:
\begin{equation}\label{karama2}
1-F(x)=c(x)\exp\big(-\int_{x_F-1}^x\frac{\alpha(s)}{x_F-s}ds\big)\qquad \forall x<x_F
\end{equation}
with $c(x)\xrightarrow{x\nearrow x_F}c>0$ and
$\alpha(x)\xrightarrow{x\nearrow x_F}\alpha>0$. Let
$\varepsilon\in]0,\alpha/2[$. There exists
$\delta_\varepsilon<x_F$ such that for every
$x>\delta_\varepsilon$, $c/2\le c(x)\le 2c$ and
$\alpha-\varepsilon\le \alpha(x)\le \alpha+\varepsilon.$ Assume
now that $(\xi_n)_{n\ge1}$ is a sequence of $i.i.d.$ random
variables with distribution function $F_\delta$. Setting
$(a_k,b_k)=(x_F-\theta_k,x_F)$, we obtain that for every
$\varepsilon>0$, there exists $n_\varepsilon\in\EN$ such that for
every $k\ge n_\varepsilon$ and $u\in[0,-{X}_k[$,
\begin{align*}
\frac{1-F(a_k(u+{X}_k)+b_k)}{\gamma_k}&=\frac{1-F(a_k(u+{X}_k)+x_F)}{1-F(x_F-a_k)}\\
&\le\frac{c(a_k(u+{X}_k)+x_F)}{c(x_F-a_k)}\exp\left(-\int_{x_F-a_k}^{x_F+a_k(u+{X}_k)}
\frac{\alpha(s)}{x_F-s}ds\right),\\&\le
C\exp\Big(\int_{a_k}^{-a_k(u+{X}_k)}\frac{\alpha(x_F-v)}{v}dv\Big),\\
\le C\Big(&(-(u+{X}_k))^{\alpha-\varepsilon}
1_{\{u+{X}_k>-1\}}+(-(u+{X}_k))^{\alpha+\varepsilon}1_{\{u+{X}_k\le-1\}}\Big).
\end{align*}
Using that $1-F(a_k(u+{X}_k)+b_k)=0$ when $u+{X}_k>0$, we
finally derive from \eqref{mom1} that
$$\ES\big[\big(\frac{\xi_{k+1}}{a_k}-{X}_k-\frac{b_k}{a_k}\big)_+/{\cal
F}_k\big]\le
C\gamma_k(1+|{X}_k|^{1+\alpha+\varepsilon}).$$ \\
\\
$(ii)$ We have
\begin{equation}\label{mom2}
\ES\left[\big(\frac{\xi_{k+1}}{a_k}-{X}_k-\frac{b_k}{a_k}\big)^2_+/{\cal
F}_k\right]=2\int_0^{+\infty}u\left(1-F(a_k(u+{X}_k)+b_k)\right)du.
\end{equation} and the result follows easily from the controls established
in $(i)$. Details are left the reader.
\end{proof}
\noindent In the next lemma, we obtain an uniform control of the
moments of the sequence $({X}_n)_{n\ge1}$ in terms of the type
of the distribution function $F$.
\begin{lemme}\label{lemme3} Let $F$ be a distribution function
and assume that there exists $\delta<x_F$ such that $F(x)=0$ for every $x<\delta$. Let $(a_n,b_n)\in{\cal C}(F)$. Then,\\
$(i)$
\begin{eqnarray*} \sup_{n\ge
1}\ES[|{X}_n|^r]<+\infty\quad
\begin{cases}\forall r\ge0 &\textnormal{if $F$ is of type 1 or $(3,\alpha)$} \\
\forall r\in[0,\alpha) &\textnormal{if $F$ is of type
$(2,\alpha)$}.
 \end{cases}
 \end{eqnarray*}
\noindent $(ii)$  Assume that $F$ is of  type 1. Then, for every
positive number $\lambda$, there exists $\delta_\lambda<x_F$ such that for every
sequence $(\xi_n)_{n\ge1}$ with distribution function $F_{\delta}$
with $ \delta\in[\delta_\lambda,x_F)$,
\begin{equation}\label{exptype1}
 \sup_{n\ge 1}\ES[\exp(-\lambda{X}_{n})]<+\infty.
 \end{equation}
\noindent $(iii)$ Assume that $F$ is of type $(2,\alpha)$
$(\alpha>0)$ such that $F(x)=0$ when $x<1$. Then,$$\sup_{n\ge
1}\ES\left[\frac{1}{({X}_n)^r}\right]<+\infty, \qquad \forall r\ge 0.$$
\end{lemme}
\begin{proof}
As in Lemma \ref{lemme2}, one derives easily from \eqref{choixanbn} that 
the assertions of Lemma \ref{lemme3} hold
with every $(a_n,b_n)_{n\ge1}\in{\cal C}(F)$ if and only if they
hold for a particular
sequence $(a_n,b_n)_{n\ge1}\in{\cal C}(F)$. Hence, we assume that $(a_n,b_n)_{n\ge1}$
is as specified in  Proposition \ref{rappelgne}.\\
$(i)$ This is a consequence of  Proposition 2.1
p.77  of \cite{resnick}. \\
$(ii)$ Let $\delta\in\ER$ with $\delta<x_F$ and let
$(\xi_n)_{n\ge1}$ be a sequence of $i.i.d.$ random variables with
distribution function $F_\delta$. First, since $F_\delta$ is of
type 1 for every $\delta<x_F$, $({X}_{n})_{n\ge 1}$ converges
in distribution. Second, $x\mapsto e^{-\lambda x}$ is bounded on
$[-L,+\infty)$ for every $L>0$. Hence, \eqref{exptype1} holds if
there exists $\delta_\lambda<x_F$ such that for every
$\delta\in[\delta_\lambda,x_F[$,
\begin{equation}\label{equiint}
\lim_{L\rightarrow+\infty}\limsup_{n\rightarrow+\infty}\ES\left[\exp(-\lambda {X}_n)1_{\{{X}_n<-L\}}\right]=0.
\end{equation}
Let us prove \eqref{equiint}. We have:
\begin{align*}
\ES\left[\exp(-\lambda{X}_{n})1_{\{{X}_{n}<-L\}}\right]&\le\ES\left[\exp\left(-\lambda{X}_{n}1_{\{{X}_{n}<-L\}}\right)\right],\\&
\le 1+\int_0^{+\infty}\lambda\exp(\lambda u)\PE({X}_{n}1_{\{X_n<-L\}}<-u)du,\\
&\le 1+C\PE({X}_{n}<-L)+\int_L^{+\infty}\lambda\exp(\lambda u)\PE({X}_{n}<-u)du.
\end{align*}
Since $\xi_1$ has distribution function $F_\delta$,
$\PE({X}_n<-u)=0$ if $u>a_n^{-1}(b_n-\delta)$ and  $\PE({X}_n<-u)=F^n(-a_n
u+b_n)$ if $u<a_n^{-1}(b_n-\delta)$. Hence, we derive from Lemma
2.2 p.78 of \cite{resnick} that,  for every $\varepsilon>0$, there
exists $\delta_\varepsilon<x_F$ such that for every
$\delta\in[\delta_\epsilon,x_F)$, for every
$u>a_n^{-1}(\delta-b_n)$,
$$\PE({X}_n<-u)=F^n(-a_n u+b_n)\le
\exp\Big(-(1-\varepsilon)^2(1+\varepsilon|u|)^{\frac{1}{\varepsilon}}\Big).$$
Setting $\varepsilon:=1/2$, we obtain that for such $\delta$,
$u\mapsto \exp(\lambda u)\PE({X}_{n}<-u)$ is dominated on
$\ER_-$ by a Lebesgue-integrable function( uniformly in $n$). Hence, we derive from
the dominated convergence theorem that
$$\lim_{L\rightarrow+\infty}\limsup_{n\rightarrow+\infty}
\int_L^{+\infty}\lambda \exp(\lambda u)\PE({X}_{n}<-u)du=0.$$
Finally, since
$$\lim_{L\rightarrow+\infty}\limsup_{n\rightarrow+\infty}\PE({X}_{n}<-L)=
\lim_{L\rightarrow+\infty}\PE(X_\infty<-L)=0,$$
\eqref{equiint} follows.\\
$(iii)$ Assume that $F(x)=0$ for every $x<1$. In particular,
${X}_n>0$ $a.s.$ for every $n\ge 1$. Then,
$$\frac{1}{{X}_n}=\frac{a_n}{\max(\xi_1,\ldots,\xi_n)}=-a_n
\max(\tilde{\xi_1},\ldots,\tilde{\xi_n})$$ where
$\tilde{\xi_1}=-\frac{1}{\xi_1}$.  Since $\xi_1$ is of type
$(2,\alpha)$, it is easy to check that $\tilde{\xi_1}$ is of type
$(3,\alpha)$ with $x_F=0$. Set $\tilde{F}(x):=\PE(\tilde{\xi_1}\le
x)$ and $\tilde {a}_n:=-\inf\{x, \tilde{F}(x)\ge 1-\frac{1}{n}\}$.
We have
$$-\tilde{a}_n=\inf\{x, F(-\frac{1}{x})\ge
1-\frac{1}{n}\}=-\frac{1}{a_n},$$ and the result follows from
$(i)$ (Control of the moments for $F$ of type $(3,\alpha)$).
\end{proof}

\noindent In Lemma \ref{lemmetension}, we state a simple criteria of
$C$-tightness adapted to this problem.

\begin{lemme} \label{lemmetension}Let $(Z^{(n)})_{n\ge 1}$ be a sequence of
c\`adl\`ag processes such that for every $n\ge1$,
\begin{equation}\label{defZN}
Z_t^{(n)}=\sum_{k=n+1}^{N(n,t)}\gamma_k\phi_k({X}_{k-1})
\end{equation}
where $(\phi_k)_{k\ge1}$ is a sequence of real functions and
assume that there exists $p>1$ such that $\sup_{k\ge
1}\ES[|\phi_k({X}_{k-1})|^p]<+\infty$. Then,
$(Z^{(n)})_{n\ge 1}$ is  $C$-tight.
\end{lemme}
\begin{proof} First, by \eqref{defZN} and the fact that $\sup_{k\ge0}\ES[|\phi_k(X_{k-1})|]<+\infty$, we have for every positive $T$ and $K$, 
\begin{align*}
\PE(\sup_{t\in[0,T]}|Z_t^{(n)}|>K)&\le\frac{1}{K}\ES[\sup_{t\in[0,T]}|Z_t^{(n)}|]\le\frac{1}{K}\sum_{k=n+1}^{N(n,T)}\gamma_k\sup_{k\ge0}\ES[|\phi_k(X_{k-1})|]\\
&\le \frac{C (\Gamma_{N(n,T)}-\Gamma_n)}{K}\le \frac{CT}{K}
\end{align*}
thanks to the definition of $N(n,T)$. Hence, for every positive $T$,
$$\PE(\sup_{t\in[0,T]}|Z_t^{(n)}|>K)\xrightarrow{K\rightarrow+\infty}0.$$
Then, according to Theorem VI.3.26 of \cite{jacodshiryaev}, we have to show that for every positive $T$, $\varepsilon$ and $\eta$, there exists $\delta>0$ and $n_0\in\mathbb{N}$ such that
$$\PE(\sup_{|t-s|\le\delta,0\le s\le t\le T}|Z_t^{(n)}-Z_s^{(n)}|\ge\varepsilon)
\le\eta\quad\forall n\ge n_0.$$
In fact, using for instance proof of Theorem 8.3 of \cite{billingsley}, it suffices to show that for every
positive  $\varepsilon,\eta$ and  $T$, there exists $\delta>0$ and
$n_0\ge 1$ such that:
\begin{equation}\label{ty}
\frac{1}{\delta}\mathbb{P}(\sup_{t\le s\le
t+\delta}|Z^{(n)}_t-Z^{(n)}_s|\ge\varepsilon)\le\eta\qquad\forall
n\ge n_0\quad\textnormal{and}\quad 0\le  t\le T.
\end{equation}
By \eqref{defZN}, for every $t\in [0,T]$ and $s\in[t,t+\delta[$,
$$|Z^{(n)}_t-Z^{(n)}_s|\le
C\sum_{k=N(n,t)+1}^{N(n,s)}\gamma_k|\phi_k({X}_{k-1})|\le
C\sum_{k=N(n,t)+1}^{N(n,t+\delta)}\gamma_k|\phi_k({X}_{k-1})|.$$
Let $p>1$ such that $\sup_{k\ge
1}\ES[|\phi_k({X}_{k-1})|^p]<+\infty$ and set
$\alpha_k=\gamma_k^{1-\frac{1}{p}}$ and
$\mu_k=\gamma_k^{\frac{1}{p}}|\phi_k({X}_{k-1})|$. We derive
from the Holder inequality with $\bar{p}=\frac{p}{p-1}$ and
$\bar{q}=p$ that
$$\sum_{k={N}(n,t)+1}^{{N}(n,t+\delta)}\gamma_k|\phi_k({X}_{k-1})|\le
\left(\sum_{k={N}(n,t)+1}^{{N}(n,t+\delta)}\gamma_k\right)^{\frac{p-1}{p}}
\left(\sum_{k={N}(n,t)+1}^{{N}(n,t+\delta)}\gamma_k|\phi_k({X}_{k-1})|^p\right)^\frac{1}{p}.$$
It follows from the Markov inequality that
$$\mathbb{P}(\sup_{t\le s\le
t+\delta}|Z^{(n)}_t-Z^{(n)}_s|\ge\varepsilon)\le\frac{1}{\varepsilon^p}\left(\sum_{k={N}(n,t)+1}^{{N}(n,t+\delta)}\gamma_k\right)^{\frac{p-1}{p}+1}
\sup_{n\ge 1}\ES[|\phi_n({X}_{n-1})|^p].$$ Since $\sup_{n\ge
1}\ES[|\phi_n({X}_{n-1})|^p]<+\infty$,
$\sum_{k={N}(n,t)+1}^{{N}(n,t+\delta)}\gamma_k\le 2\delta$ for
sufficiently large $n$, we deduce that \eqref{ty} holds for
sufficiently small $\delta$.
\end{proof}
\noindent We can now state the main result of this section:
\begin{prop}\label{tight}
Let $F$ be a distribution function of  type 1, $(2,\alpha)$ with $\alpha>2$ or, $(3,\alpha)$
with $\alpha>0$, and assume $\mathbf{H_\Lambda^2}(F)$ if $F$ is of type 1. 
Let $(a_n,b_n)\in{\cal C}(F)$ such that $(\rho_n)_{n\ge1}$ and $(\beta_n)_{n\ge1}$ are bounded. 
Then,  the sequence $({\cal X}^{(n)})_{n\ge 1}$ is
tight on $\mathbb{D}(\ER_+,\ER)$.
\end{prop}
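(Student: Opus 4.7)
The plan is to exploit the semimartingale structure of the recursion \eqref{convloischema}. Setting $J_k := (\xi_k/a_{k-1} - X_{k-1} - b_{k-1}/a_{k-1})_+$, I would decompose
\begin{equation*}
{\cal X}^{(n)}_t - X_n = D^{(n)}_t + A^{(n)}_t + M^{(n)}_t,
\end{equation*}
where
\begin{equation*}
D^{(n)}_t = \sum_{k=n+1}^{N(n,t)} \gamma_k(\rho_k X_{k-1} + \beta_k),\qquad A^{(n)}_t = \sum_{k=n+1}^{N(n,t)} (1+\rho_k \gamma_k)\,\ES[J_k \mid {\cal F}_{k-1}],
\end{equation*}
and $M^{(n)}_t = \sum_{k=n+1}^{N(n,t)}(1+\rho_k\gamma_k)(J_k - \ES[J_k\mid {\cal F}_{k-1}])$ is a square-integrable martingale w.r.t.\ $({\cal F}_{N(n,t)})_t$. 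Tightness of ${\cal X}^{(n)}$ will then follow from tightness of each of the three pieces. At the outset I would invoke Lemma \ref{lemme1} to replace $F$ by $F_\delta$ with $\delta$ close to $x_F$ (and further shift to $F$ supported in $[1,+\infty)$ in the type $(2,\alpha)$ case), so that all the bounds of Lemmas \ref{lemme2} and \ref{lemme3} are available.

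The pieces $D^{(n)}$ and $A^{(n)}$ fit directly the framework of Lemma \ref{lemmetension}. For $D^{(n)}$, $\phi_k(x) = \rho_k x + \beta_k$ is linear with uniformly bounded coefficients, and $L^p$--boundedness of $X_{k-1}$ for some $p>1$ comes from Lemma \ref{lemme3}(i); here $\alpha > 2$ comfortably gives $p=2$ in type $(2,\alpha)$. For $A^{(n)}$, Lemma \ref{lemme2}(i) expresses the summand as $\gamma_k\,\psi_k(X_{k-1})$ with $\psi_k$ of exponential, negative-polynomial, or positive-polynomial growth according to the type, and uniform $L^p$ control (for some $p>1$) is provided by Lemma \ref{lemme3}(ii), (iii), or (i) respectively. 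Hence both $D^{(n)}$ and $A^{(n)}$ are $C$-tight.

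For the martingale part $M^{(n)}$, I would use Aldous' criterion combined with Doob's inequality. Lemma \ref{lemme2}(ii) yields $\ES[J_k^2\mid{\cal F}_{k-1}] \le C\gamma_k\,\chi_k(X_{k-1})$ with $\chi_k$ again of exponential or polynomial growth, and Lemma \ref{lemme3} gives uniform $L^1$--boundedness of $\chi_k(X_{k-1})$ in each of the three cases. Consequently $\sup_n \ES[\langle M^{(n)}\rangle_T] \le C_T$, so Doob controls $\ES[\sup_{t\le T}|M^{(n)}_t|^2]$, and for any bounded stopping times $\tau_n$ and $\delta_n \downarrow 0$ one has
\begin{equation*}
\ES\bigl[(M^{(n)}_{\tau_n+\delta_n} - M^{(n)}_{\tau_n})^2\bigr] = \ES\bigl[\langle M^{(n)}\rangle_{\tau_n+\delta_n} - \langle M^{(n)}\rangle_{\tau_n}\bigr] \le C\delta_n \to 0,
\end{equation*}
which gives tightness of $M^{(n)}$ on $\mathbb{D}(\ER_+,\ER)$ via Aldous' criterion (cf.\ Theorem VI.3.26 of \cite{jacodshiryaev}). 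The main obstacle I anticipate is the case-by-case bookkeeping across the three types; in particular the exponential moment bound in type $1$ under $\mathbf{H_\Lambda^2}(F)$ must be established uniformly in $n$ \emph{after} the support reduction of Lemma \ref{lemme1}, noting that the parameter $\delta$ appearing in Lemma \ref{lemme3}(ii) depends on the decay rate $\lambda$, which itself enters Lemma \ref{lemme2}(i)--(ii); one must therefore fix $\lambda$ first and then choose $\delta$ accordingly, rather than the other way around.
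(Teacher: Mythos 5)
Your proposal follows essentially the same route as the paper: the same drift/compensator/martingale decomposition of ${\cal X}^{(n)}$ after the support reduction of Lemma \ref{lemme1}, with Lemma \ref{lemmetension} applied to the finite-variation parts via the moment bounds of Lemmas \ref{lemme2} and \ref{lemme3}. The only difference is cosmetic: for the martingale part the paper deduces tightness from the $C$-tightness of the predictable bracket $\langle Y^{(n)}\rangle$ (handled again by Lemma \ref{lemmetension} with the same second-moment bounds), whereas you invoke Aldous' criterion --- equivalent machinery resting on the identical bracket estimate.
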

\begin{proof} Using the convention
$\underset{\emptyset}{\sum}=0$,  ${\cal X}_t^{(n)}$ can be
written as follows:
\begin{align*}
&{\cal X}_t^{(n)}=X_n+D_t^{(n)}+Y_t^{(n)}\quad \textnormal{where,}\\ D_t^{(n)}=&\sum_{k=n+1}^{N(n,t)}\gamma_k\Big(\rho_k{X}_{k-1}+\beta_k
+(1+\rho_k\gamma_k)h_k({X}_{k-1})\Big),\\
Y_t^{(n)}=&\sum_{k=n+1}^{N(n,t)}(1+\rho_k\gamma_k)\Delta\bar{Y}_{k}\quad\textnormal{with}\quad
h_k(x)=\frac{1}{\gamma_k}\ES\left[\big(\frac{\xi_1}{a_{k-1}}-x-\frac{b_{k-1}}{a_{k-1}}\big)_+\right]\quad\textnormal{and,}\\
\Delta\bar{Y}_{k}=&\big(\frac{\xi_{k}}{a_{k-1}}-{X}_{k-1}-\frac{b_{k-1}}{a_{k-1}}\big)_+-
\gamma_k h_k({X}_{k-1}).
\end{align*}
By Lemma \ref{lemme1}, we can assume that there exists
$\delta<x_F$ such that $F(x)=0$ for every $x<\delta$. Then, in
this case, it follows from Lemma \ref{lemme3} and from the
assumptions on $\alpha$ when $F$ is of type $(2,\alpha)$ that
$(Y_t^{(n)})_{n\ge 1}$ is a square-integrable martingale. We have
\begin{align*}
\psg Y^{(n)}\psd_t&=\sum_{k=n+1}^{N(n,t)}(1+\rho_k\gamma_k)^2\ES\left[(\Delta\bar{Y}_{k})^2/{\cal
F}_{k-1}\right]\\
&=\sum_{k=n+1}^{N(n,t)}(1+\rho_k\gamma_k)^2\left(
\ES\left[\big(\frac{\xi_{k}}{a_{k-1}}-{X}_{k-1}-\frac{b_{k-1}}{a_{k-1}}\big)^2_+/{\cal
F}_{k-1}\right]-\gamma_k^2 h_k^2({X}_{k-1})\right).
\end{align*}
Then, it is standard that if we want to show that  $({\cal X}^{(n)})_{n\ge 1}$ is
tight on $\mathbb{D}(\ER_+,\ER)$ it suffices to prove that  $(D^{(n)})_{n\ge 1}$ and
$(\psg Y^{(n)}\psd)_{n\ge 1}$ are $C$-tight (see $e.g.$
\cite{jacodshiryaev}). Let us show this last assertion. First, we
observe that
\begin{align*}
&D_t^{(n)}=\sum_{k=n+1}^{N(n,t)}\gamma_k\phi_{k,1}({X}_{k-1})
\quad\textnormal{and}\quad
\psg Y^{(n)}\psd_t=\sum_{k=n+1}^{N(n,t)}\gamma_k\phi_{k,2}({X}_{k-1})\quad\textnormal{with,}\\
&\phi_{k,1}(x)=\rho_k x+\beta_k
+(1+\rho_k\gamma_k)h_k(x)\qquad \textnormal{and,}\\
&\phi_{k,2}(x)=\frac{(1+\rho_k\gamma_k)^2}{\gamma_k}\left(\ES\left[\big(\frac{\xi_{1}}{a_{k-1}}-x-\frac{b_{k-1}}{a_{k-1}}\big)^2_+\right]-\gamma_k^2
h_k^2(x)\right).
\end{align*}
Let $\varepsilon>0$ and $\delta_\varepsilon<x_F$ such that Lemma
\ref{lemme2} holds and assume that $F(x)=0$ for every
$x<\delta_\varepsilon$. Since $(\rho_k)$ and $(\beta_k)$
are bounded, we derive from Lemma \ref{lemme2} and \ref{lemme3}
that
\begin{eqnarray*}
\phi_{k,1}(X_{k-1})\le C(1+|X_{k-1}|)+C\begin{cases}e^{-\lambda X_{k-1}}\quad
(\lambda>0)&\textnormal{ if $F$ is of type 1,} \\
|X_{k-1}|^{-(\alpha+\varepsilon)+1}&\textnormal{ if $F$ is of type $(2,\alpha),$} \\
|X_{k-1}|^{\alpha+\varepsilon+1}&\textnormal{ if $F$ is of type
$(3,\alpha)$,}
 \end{cases}
 \end{eqnarray*}
and
\begin{eqnarray*}
\phi_{k,2}(X_{k-1})\le
C\begin{cases}\big(1+e^{-\lambda X_{k-1}}\big)\quad
(\lambda>0)&\textnormal{ if $F$ is of type 1,} \\
\big(1+|X_{k-1}|^{-(\alpha+\varepsilon)+2}\big)&\textnormal{ if $F$ is of type $(2,\alpha)$,} \\
\big(1+|X_{k-1}|^{\alpha+\varepsilon+2}\big)&\textnormal{ if $F$ is of
type $(3,\alpha)$.}
 \end{cases}
 \end{eqnarray*}
Then, we derive from Lemma \ref{lemme3} that $\sup_{k\ge
1}\ES[|\phi_{k,i}({X}_{k-1})|^2]<+\infty$ for $i=1,2$ and it
follows from Lemma \ref{lemmetension} that $(D^{(n)})_{n\ge 1}$
and $(\psg Y^{(n)}\psd)_{n\ge 1}$ are $C$-tight.
\end{proof}
\section{Characterization of the limit}\label{identifilimit}
Denote by ${\cal A}_\rho^\beta$ the operator defined on ${\cal C}^1_K(D_G)$ by
\begin{equation}\label{arhobeta}
{\cal A}_\rho^\beta f(x)=(\rho x+\beta)f'(x)+\int_0^{+\infty}\Big(f(x+y)-f(x)\Big)\varphi_G(x+y)dy. 
\end{equation}
The main objective  of this section is to show that every weak limit ${\cal X}^{\infty}$
of $({\cal X}^{(n)})_{n\ge1}$ solves the martingale problem $({\cal A}_\rho^\beta,\nu_G, {\cal C}^1_K(D_G))$, $i.e.$
we want to prove that if ${\cal X}^{\infty}$ exists, the two following properties are satisfied: ${\cal L}({\cal X}^{\infty}_0)=\nu_G$ and for every $f\in{\cal C}^1_K(D_G)$, $(M_t^f)$ defined by
$$M_t^f=f({\cal X}^{\infty}_t)-f({\cal X}^\infty_0)-\int_0^t A_\rho^\beta f({\cal X}^\infty_s)ds$$
is a martingale. These properties are obtained in Proposition \ref{propvalda}. The main tool for this result is Lemma \ref{lemme5} where we show that the increments of $X_n$ are \guil asymptotically equal" to those of the limit process. In this way, we need to establish the following lemma.
 \begin{lemme}\label{lemmeuniform} Let $F$ be of type 1, $(2,\alpha)$ ou $(3,\alpha)$ and
let $(a_n,b_n)\in{\cal C}(F)$.
Then,
\begin{equation*}
\sup_{x\in K}\Big(\frac{1-F(a_n
x+b_n)}{1-F(\theta_n)}-\tau_G(x)\Big)\xrightarrow{n\rightarrow+\infty}0\quad
\textit{for any compact subset $K$ of $D_G$.}
 \end{equation*}
\end{lemme}
\begin{proof} Assume first that $F$ is of type 1. Using that $a_k\sim g(\theta_k)$ and 
$b_k-\theta_k=o(g(\theta_k))$ when $k\longrightarrow+\infty$, we derive from the Von Mises Representation (see \eqref{vonmises}) that for sufficiently large $k$,
\begin{equation}
\frac{1-F(a_k x+b_k)}{1-F(\theta_k)} =\frac{c(a_k
x+b_k)}{c(\theta_k)}\exp\left(-\int_0^{x+\varepsilon_1(k,x)}\frac{g(\theta_k)}{g(\theta_k+sg(\theta_k))}ds\right)
\end{equation}
where $c(x)\xrightarrow{x\nearrow x_F}c>0$ and $(\varepsilon_1(k,.))_{k\ge1}$ is a sequence of functions
which converges locally uniformly on $\ER$ to 0. According to Lemmas
1.2 p.40 and 1.3 p.41 of \cite{resnick}, the sequences of functions $(x\mapsto a_k x+b_k)_{k\ge1}$
and $(x\mapsto\frac{g(\theta_k)}{g(\theta_k+x g(\theta_k))})_{k\ge1}$ converge locally
uniformly on $\ER$, to $x_F$ and $1$ respectively. The result follows easily in this case.\\

\noindent Suppose now that $F$ is of type $(2,\alpha)$ and consider a  compact
subset $K$ of $(0,+\infty)$. Using that $a_k\sim\theta_k$ and $b_k=o(\theta_k)$ when $k\rightarrow+\infty$, the Karamata Representation (see \eqref{karama}) yields for sufficiently large $k$,
\begin{equation*}
\frac{1-F(a_kx+b_k)}{1-F(\theta_k)}=\frac{c(a_kx+b_k)}{c(\theta_k)}\exp\left(-\int_{1}^{x+\varepsilon_2(k,x)}\frac{\alpha(\theta_k
s)}{s}ds\right).
\end{equation*}
where $c(x)\xrightarrow{x\rightarrow+\infty}c>0$,
$\alpha(x)\xrightarrow{x\rightarrow+\infty}\alpha>0$ and $(\varepsilon_2(k,.))_{k\ge1}$ is a sequence of functions
which converges uniformly on $K$ to 0. The result follows in this case from the fact that 
$\theta_n\xrightarrow{n\rightarrow+\infty}+\infty$.\\
Finally, if $F$ is of type $(3,\alpha)$, one considers a compact subset $K$
of $\ER_-$. Using  that $a_k\sim x_F-\theta_k$ and $b_k-x_F=o(a_k)$, one derives from Corollary 1.14 p.62 of \cite{resnick} that for sufficiently large $k$ and for every $x\in K$,
\begin{align*}
&\frac{1-F(a_kx+b_k)}{1-F(\theta_k)}=\frac{c(a_kx+b_k)}{c(\theta_k)}
\exp\left(-\int_{-1}^{x+\varepsilon_3(k,x)}\frac{\alpha((x_F-\theta_k)v+x_F)}{v}dv\right).
\end{align*}
where $c(x)\xrightarrow{x\nearrow x_F}c>0$,
$\alpha(x)\xrightarrow{x\nearrow x_F}\alpha>0$ and $(\varepsilon_2(k,.))_{k\ge1}$ is a sequence of functions
which converges uniformly on $K$ to 0. Therefore, the result follows from the fact that
$x_F-\theta_n\xrightarrow{n\rightarrow+\infty}0$.
\end{proof}
\noindent 
\begin{lemme}\label{lemme5}
Let $F$ be of type 1, $(2,\alpha)$  with $\alpha>2$ or $(3,\alpha)$ and let $(a_n,b_n)_{n\ge1}\in{\cal C}(F)$ such that $(\rho_n)$ and $(\beta_n)$ converge to some finite numbers $\rho$ and $\beta$ respectively and such that the assertions of Lemmas \ref{lemme2} and \ref{lemme3} hold. Then,  for every $f\in {\cal C}_K^1(D_G)$,
$$\ES[f({ X}_{n+1})-f({ X}_n)/{\cal
F}_{n}]=\gamma_{n+1}{\cal A}_\rho^\beta f({ X}_n)+\gamma_{n+1}R_n$$  where
${\cal A}_\rho^\beta$ is defined by \eqref{arhobeta} and $(R_n)$ is an $({\cal F}_n)$-adapted sequence such that $R_n\longrightarrow0$ in $L^1$.
\end{lemme}
\begin{proof}  First, we show that $(R_n)$ defined for every $n\in\EN$ by
$$R_n:=\frac{1}{\gamma_{n+1}}\ES[f({ X}_{n+1})-f({ X}_n)/{\cal
F}_{n}]-{\cal A}_\rho^\beta f({ X}_n)$$
is uniformly integrable.  On the one hand, using that $f$ is a Lipschitz function, it follows from Lemma \ref{lemme2}  that 
\begin{align*}
V_n:=\frac{1}{\gamma_{n+1}}&\Big|\ES[f({ X}_{n+1})-f({ X}_n)/{\cal
F}_{n}]\Big|\\
&\le C(|\rho_{n+1}| |X_n|+|\beta_{n+1}|)+\begin{cases}
\big(1+e^{-\lambda{X}_{n}}\big)\quad
(\lambda>0)&\textnormal{ if $F$ is of type 1,} \\
\big(1+|{X}_{n}|^{-(\alpha+\varepsilon)+1}\big)&\textnormal{ if $F$ is of type $(2,\alpha)$,} \\
\big(1+|{X}_{n}|^{\alpha+\varepsilon+1}\big)&\textnormal{ if
$F$ is of type $(3,\alpha)$}.
\end{cases}
\end{align*}
Therefore, since $(\rho_n)$ and $(\beta_n)$ are bounded, we derive from Lemma \ref{lemme3} that there exists $\eta>1$ such that $$\sup_{n\ge1}\ES[|V_n|^\eta]<+\infty.$$
On the other hand,  one checks that
$A_\rho^{\beta}f$ is a bounded function. Actually, by an integration by parts, $A_\rho^\beta f$ can be written:
\begin{equation}\label{rep1A}
{\cal A}_\rho^\beta f(x)=(\rho x+\beta)f'(x)+\int_0^{+\infty}f'(x+y)\tau_G(x+y)dy. 
\end{equation}
Therefore, since $f\in{\cal C}^K_1(D_G)$,
$$|A_\rho^\beta f(x)|\le \|f'\|_\infty\Big(|\rho||x|1_{x\in D_G}+|\beta|)+\int_{D_G}\tau_G(u)du\Big)<C<+\infty.$$
It follows that there exists $\eta>1$ such that
 $$\sup_{n\ge1}\ES[|R_n|^\eta]<+\infty.$$ This implies in particular that $(R_n)$ is uniformly integrable.  For a uniformly integrable sequence, convergence in probability implies convergence in $L^1$. One deduces that one only needs to prove that $R_n\rightarrow0$ in probability.\\

\noindent We decompose the increment $f({ X}_{n+1})-f({ X}_n)$ as follows:
\begin{equation}\label{decomp}
f({ X}_{n+1})-f({ X}_n)=f({ X}_{n,1})-f({ X}_n)+f({ X}_{n+1})-f({ X}_{n,1}),
\end{equation} where
${ X}_{n,1}={ X}_{n}+\gamma_{n+1}(\rho_{n+1}{ X}_{n}+\beta_{n+1}).$
First, by the Taylor formula,
\begin{equation}\label{taylor1}
f({ X}_{n,1})-f({ X}_n)=\gam
f'(c_{n+1})(\rho_{n+1}{ X}_{n}+\beta_{n+1}).
\end{equation}
with $c_{n+1}\in[{ X}_{n},{ X}_{n,1}]$. Denoting by ${\cal L}_1$, the operator defined for every $f\in{\cal C}_K^1(\ER)$ by
${\cal L}_1f(x)=f'(x)(\rho x+\beta)$, one deduces that
$$f({ X}_{n,1})-f({ X}_n)=\gam{\cal L}_1
f({ X}_{n})+\gam R_{n,1},$$ with
$$R_{n,1}=f'({ X}_{n})\big((\rho_{n+1}-\rho){ X}_{n}+\beta_{n+1}-\beta\big)
+\big(f'(c_{n+1})-f'(X_n)\big)(\rho_{n+1}{ X}_{n}+\beta_{n+1}).$$
Using that $f$ has compact support, that $(\rho_n)_{n\ge 1}$ and
$(\beta_n)_{n\ge 1}$ are bounded and that
$\gamma_{n}\xrightarrow{n\rightarrow+\infty} 0$, one checks that there exists
$M>0$ and $n_0\ge 1$ such that for every $x$ with $|x|\ge M$, for every
$\theta\in[0,1]$ and $n\ge n_0$,
\begin{equation}\label{f2cn}
f^{'}\left(x+\theta\gamma_n(\rho_n x+\beta_n)\right)=f'(x)=0.
\end{equation}
Furthermore, 
$$\sup_{|x|\in[-M,M]}(|\gamma_n(\rho_n x+\beta_n)|)\xrightarrow{n\rightarrow+\infty}0.$$
Therefore, since $f'$ is uniformly continuous with compact support, for every $\varepsilon>0$, there exists $n_\varepsilon\in\EN$ such that 
$$|R_{n,1}|\le
C\big(|\rho_{n+1}-\rho|+|\beta_{n+1}-\beta|+\varepsilon \big).$$
Using that $\rho_n\rightarrow\rho$ and $\beta_n\rightarrow\beta$, it follows that
$$R_{n,1}\xrightarrow{n\rightarrow+\infty}0\quad a.s.$$
Let us now focus on the second term of the right-hand member  of \eqref{decomp}. We decompose it
as follows:
\begin{align*}
 &f({ X}_{n+1})-f({ X}_{n,1})=\Delta_{n+1,1}-\Delta_{n+1,2}+\Delta_{n+1,3}\qquad\textnormal{with,}\\
&\Delta_{n+1,1}=f\left({ X}_{n}+\Big(\frac{\xi_{n+1}}{a_n}-{ X}_{n}-\frac{b_n}{a_n}\Big)_+\right)-f({ X}_{n}),\\
&\Delta_{n+1,2}=\Big(f({ X}_{n,1})-f({ X}_{n})\Big)
1_{\{\frac{\xi_{n+1}}{a_n}-X_n-\frac{b_n}{a_n}>0\}}\quad\textnormal{and,}\\
&\Delta_{n+1,3}=\left(f({ X}_{n+1})-f\left({ X}_n+
\Big(\frac{\xi_{n+1}}{a_n}-{ X}_{n}-\frac{b_n}{a_n}\Big)_+\right)\right)
1_{\{\frac{\xi_{n+1}}{a_n}-{ X}_{n}-\frac{b_n}{a_n}>0\}}.
\end{align*}
Denote by ${\cal L}_2$, the operator defined on ${\cal C}_K^1(D_G)$ by 
\begin{align}
{\cal L}_2f(x)&=\int_0^{+\infty}\Big(f(x+y)-f(x)\Big)\varphi_G(x+y)dy\nonumber\\
&=\int_0^{+\infty} f'(x+y)\tau_G(x+y)dy.\label{repl2}
\end{align}
In order to prove the lemma, it suffices now to show the three following points:
\begin{align*}
&\textnormal{(i)}\quad \ES[\Delta_{n+1,1}/{\cal F}_n]=\gam
{\cal L}_2f({ X}_{n})+\gam R_{n,2} \quad\textnormal{with}\quad
R_{n,2}\overset{\PE}{\longrightarrow} 0.
\\
&\textnormal{(ii)}\quad R_{n,3}:=\frac{1}{\gam}\ES[\Delta_{n+1,2}/{\cal
F}_n]\overset{\PE}{\longrightarrow}  0,\\
&\textnormal{(iii)} \quad R_{n,4}:=\frac{1}{\gam}\ES[\Delta_{n+1,3}/{\cal
F}_n]\overset{\PE}{\longrightarrow}  0.
\end{align*}
(i)  First, since $\xi_{n+1}$ is independent of ${\cal F}_n$, we have
\begin{equation*}
\ES[\Delta_{n+1,1}/{\cal
F}_n]=\psi_n(X_n)\quad \textnormal{with}\quad\psi_n(x)=\ES\left[f\left(x+\Big(\frac{\xi_{n+1}}{a_n}-x-\frac{b_n}{a_n}\Big)_+\right)-f(x)\right]
\end{equation*}
and a simple transformation yields
\begin{align*}
\psi_n(x)&=\int_0^{+\infty}f'(x+u)\PE\left(\frac{\xi_{n+1}}{a_n}-x-\frac{b_n}{a_n}>u\right)du\\
&=\int_0^{+\infty}f'(x+u)\left(1-F(a_n(x+u)+b_n)\right)du.
\end{align*}
Hence, it follows from \eqref{repl2} that
\begin{align*} &\ES[\Delta_{n+1,1}/{\cal
F}_n]=\gam {\cal L}_2f({ X}_n)+\gam R_{n,2}\quad\textnormal{where,}\quad R_{n,2}=\int_0^{+\infty}H_n(X_n,u)du,\\
&\textnormal{with}\quad H_n(x,u)=f'(x+u)\Big(\frac{1-F(a_n(x+u)+b_n)}{1-F(\theta_n)}-\tau_G(x+u)\Big).
\end{align*}
Since $f\in{\cal C}_K^1(D_G)$, there exists a compact subset $K_G$ of $D_G$ such that $f'(x)=0$ when $x\in K_G^c$. Then,
 $H_n(x,u)=0$  as soon as  $x+u\in K_G^c$. \\
Therefore, one derives from Lemma \ref{lemmeuniform} that for every $\varepsilon>0$, there exists $n_0\in \EN$ such that for every $n\ge n_0$, $a.s.$,
 $$|H_n({ X}_n,u)|\le \varepsilon 1_{\{({ X}_n+u) \in K_G\}}\|f'\|_\infty\quad \Longrightarrow\quad |R_{n,2}|\le  \varepsilon \lambda(K_G)\|f'\|_\infty,$$
where $\|f'\|_\infty=\sup_{\{x\in K_G\}}|f'(x)|$ and $\lambda$ denotes the Lebesgue measure. Hence, $R_{n,2}\xrightarrow{n\rightarrow+\infty}0$ $a.s.$ \\
\noindent (ii) First,  since $f$ has compact support, a similar argument as that used for  \eqref{f2cn} shows that 
there exists $M>0$ and $n_0\in\EN$ such that for every $x$ satisfying $|x|>M$, for every
$\theta\in[0,1]$ and $n\ge n_0$,
$$f^{'}\left(x+\theta\gamma_n(\rho_n x+\beta_n)\right)=0.$$
Therefore, since $f$ is a Lispchitz continuous function and the sequences $(\rho_n)$ and $(\beta_n)$ are bounded, one obtains that
$$|f({ X}_{n,1})-f({ X}_n)|\le C\gam 1_{\{|{ X}_n |\le M \}}\quad \forall n\ge n_0.$$
Hence, 
\begin{align}
|R_{n,3}|\le C\PE\Big(\frac{\xi_{n+1}}{a_n}&-X_n-\frac{b_n}{a_n}>0/{\cal
 F}_n\Big)=C\left(1-F(a_n{ X}_n+b_n)\right)\nonumber\\
 &=C\left(1-F(\max(\xi_1,\ldots,\xi_n)\right)\xrightarrow{n\rightarrow+\infty}0\quad a.s.\label{rn3preuve}
 \end{align}
\noindent (iii) We consider $R_{n,4}$. Since $f$ is a bounded Lipschitz continuous function,
\begin{align*}
\big|f({ X}_{n+1})&-f({ X}_n+\big(\frac{\xi_{n+1}}{a_n}-{ X}_{n}-\frac{b_n}{a_n}\big)_+)\big|
1_{\{\frac{\xi_{n+1}}{a_n}-{ X}_{n}-\frac{b_n}{a_n}>0\}}\\&\le
C\gamma_n\big|(\beta_n+\rho_n{ X}_n)+\rho_n(\frac{\xi_{n+1}}{a_n}-{ X}_{n}-
\frac{b_n}{a_n})_+\big|1_{\{\frac{\xi_{n+1}}{a_n}-{ X}_{n}-\frac{b_n}{a_n}>0\}}.
\end{align*}
It follows that
$$|R_{n,4}|\le C(1+|{ X}_n|)\big(1-F(a_n{ X}_n+b_n)\big)+C\ES\left[(\frac{\xi_{n+1}}{a_n}-{ X}_{n}-
\frac{b_n}{a_n})_+/{\cal F}_n\right].$$
On the one hand,  Lemmas
\ref{lemme2} and \ref{lemme3} imply that the second term of the right-hand member converges to 0 in
$L_1$. On the other hand, for the first term, we use the Skorokhod Representation Theorem.
By Proposition \ref{rappelgne}, $({ X}_n)_{n\ge 1}$ converges weakly to a random variable $X_\infty$ with distribution $\nu_G$. Hence, by the Skorokhod Representation Theorem, one can construct $(\tilde{X}_n)$ and $\tilde{X}_\infty$ on a probability space $(\tilde{\Omega},\tilde{\cal F},\tilde{P})$,
 such that ${\cal L}(\tilde{X}_n)={\cal L}({ X}_n)$, ${\cal L}(\tilde{X}_\infty)={\cal L}({ X}_\infty)$
 and   $\tilde{X}_n\xrightarrow{n\rightarrow+\infty} \tilde{X}_\infty$ $a.s.$ Since ${\rm supp}(\nu_G)\subset D_G$, it implies that  $a.s.$, there exists $n_0(\omega)$ such that  $(\tilde{X}_n(\omega))_{n\ge n_0(\omega)}$ is contained in a compact set $K_G(\omega)$ of $D_G$. Hence, using Lemma \ref{lemmeuniform} and the fact that $(\tilde{X}_n(\omega))$ is bounded, one obtains: 
$$(1+|\tilde{X}_n|)\big(1-F(a_n\tilde{X}_n+b_n)\big)\xrightarrow{n\rightarrow+\infty}
0\quad a.s.$$ 
It follows that
$$(1+|{ X}_n|)\big(1-F(a_n{ X}_n+b_n)\big)\overset{\PE}{\longrightarrow}0 \quad\textnormal{as $n\rightarrow+\infty$}.$$
This completes the proof. 
\end{proof}
\begin{prop}\label{propvalda}
Let $F$ be of type 1, $(2,\alpha)$ with $\alpha>2$ or $(3,\alpha)$ and let $(a_n,b_n)_{n\ge1}\in{\cal C}(F)$ such that $(\rho_n)$ and $(\beta_n)$ converge to some finite numbers $\rho$ and $\beta$ respectively. Assume $\mathbf{H_\Lambda^2}(F)$ if $F$ is of type 1.  Suppose that $({\cal X}^{(n)})$ admits at least one weak limit denoted by ${\cal X}^{\infty}$. Then, ${\cal X}^{\infty}$ solves the martingale problem $({\cal A}_\rho^\beta,\nu_G,{\cal C}_K^1(D_G))$. Moreover, if existence and uniqueness hold for this martingale problem,  ${\cal X}^{\infty}$ is a stationary Markov process.
\end{prop}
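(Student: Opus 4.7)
The plan is to verify the two defining conditions of the martingale problem, and then to deduce stationarity from uniqueness. The initial distribution is immediate: since ${\cal X}^{(n)}_0 = X_n$ converges weakly to $\nu_G$ by Proposition \ref{rappelgne}, and $t=0$ is a continuity point of any c\`adl\`ag limit, passing to the weak limit yields ${\cal L}({\cal X}^{\infty}_0) = \nu_G$.

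For the martingale property, I would rewrite the conclusion of Lemma \ref{lemme5} as the exact decomposition
\[ f(X_k) - f(X_{k-1}) = \gamma_k {\cal A}_\rho^\beta f(X_{k-1}) + \gamma_k R_{k-1} + \Delta N_k^f, \]
where $\Delta N_k^f := f(X_k) - \ES[f(X_k)/{\cal F}_{k-1}]$ are martingale increments and $\ES[|R_{k-1}|]\to 0$. Summing from $k=n+1$ to $N(n,t)$ and comparing with $\int_0^t {\cal A}_\rho^\beta f({\cal X}^{(n)}_s)\,ds$, which equals $\sum_{k=n+1}^{N(n,t)} \gamma_k {\cal A}_\rho^\beta f(X_{k-1})$ up to a discretization error of size at most $\gamma_{N(n,t)+1} \|{\cal A}_\rho^\beta f\|_\infty$, I obtain
\[ M_t^{f,n} := f({\cal X}^{(n)}_t) - f({\cal X}^{(n)}_0) - \int_0^t {\cal A}_\rho^\beta f({\cal X}^{(n)}_s)\,ds = \widetilde{M}_t^{f,n} + \varepsilon_n(t), \]
where $\widetilde{M}_t^{f,n} := \sum_{k=n+1}^{N(n,t)} \Delta N_k^f$ is a discrete-time martingale (evaluated at the nondecreasing family of $({\cal F}_k)$-stopping times $N(n,\cdot)$) and $\varepsilon_n(t) \to 0$ in $L^1$, the latter coming from $\ES[|\sum \gamma_k R_{k-1}|] \le (\Gamma_{N(n,t)}-\Gamma_n)\sup_{k\ge n+1}\ES[|R_{k-1}|]\le t\cdot o(1)$. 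Next, I would fix $0\le s<t$ and continuity times $t_1<\cdots<t_p\le s$ of ${\cal X}^{\infty}$ (a cocountable set $a.s.$) together with a bounded continuous $\Phi:\ER^p\to\ER$. The martingale property of $\widetilde{M}^{f,n}$ gives $\ES[\Phi({\cal X}^{(n)}_{t_1},\ldots,{\cal X}^{(n)}_{t_p})(\widetilde{M}_t^{f,n}-\widetilde{M}_s^{f,n})]=0$; combined with the $L^1$ control of $\varepsilon_n$, this yields $\ES[\Phi(\cdots)(M_t^{f,n}-M_s^{f,n})]\to 0$. Since $f$ has compact support and ${\cal A}_\rho^\beta f$ is bounded (from the integration-by-parts representation \eqref{rep1A} combined with the finiteness of $\int_{D_G}\tau_G(u)\,du$, checked case by case), $M^{f,n}$ is uniformly bounded, so the portmanteau theorem together with Skorokhod continuity of the evaluation maps at continuity times produces $\ES[\Phi({\cal X}^{\infty}_{t_1},\ldots,{\cal X}^{\infty}_{t_p})(M_t^{f,\infty}-M_s^{f,\infty})]=0$; right-continuity of $M^{f,\infty}$ extends this to all choices of times, proving the martingale property.

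For the final assertion, uniqueness of the martingale problem makes ${\cal X}^{\infty}$ Markov via the standard conditional argument (the law given ${\cal F}_s$ is characterized by the same martingale problem starting at ${\cal X}^{\infty}_s$). Stationarity then follows because, for every continuity time $h>0$, ${\cal X}^{(n)}_h = X_{N(n,h)}$ with $N(n,h)\to+\infty$, hence ${\cal L}({\cal X}^{\infty}_h)=\nu_G$; right-continuity extends this to all $h\ge 0$, and the shifted process ${\cal X}^{\infty}_{\cdot+h}$ solves the same martingale problem with initial law $\nu_G$, so uniqueness forces ${\cal L}({\cal X}^{\infty}_{\cdot+h})={\cal L}({\cal X}^{\infty})$. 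The main obstacle will be the careful passage to the weak limit: one must select a set of continuity times, verify Skorokhod continuity of the functional on this set, and use uniform boundedness to justify the interchange of limit and expectation. This is the classical martingale-problem identification scheme, but it needs care here because the limit process has jumps.
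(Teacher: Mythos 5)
Your proposal is correct in substance and follows the same backbone as the paper: the decomposition of $f(X_k)-f(X_{k-1})$ from Lemma \ref{lemme5} into drift, vanishing remainder and martingale increments, the $L^1$ control of the discretization error plus $\sum\gamma_k R_{k-1}$ uniformly on $[0,T]$, the identification of all one-dimensional marginals of ${\cal X}^\infty$ with $\nu_G$, and the appeal to uniqueness for the Markov/stationarity conclusion. Where you diverge is in the mechanism for passing the martingale property to the limit: the paper first shows that $H^f(\alpha)_t=f(\alpha_t)-f(\alpha_0)-\int_0^t{\cal A}_\rho^\beta f(\alpha_s)ds$ is Skorokhod-continuous (Theorem VI.1.14 of \cite{jacodshiryaev}), deduces from Lemma VI.3.31 that $M^{(n_k)}\Rightarrow H^f({\cal X}^\infty)$, and then invokes Corollary IX.1.19 (weak limits of martingales with uniformly bounded jumps are local martingales), whereas you run the elementary finite-dimensional argument: test against $\Phi({\cal X}^{(n)}_{t_1},\dots,{\cal X}^{(n)}_{t_p})$ at continuity times and use portmanteau plus boundedness. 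Both are standard and valid; the paper's route outsources the "jumps" difficulty you worry about to the cited J--S results, while yours is more self-contained but requires you to be careful that $s$ and $t$ themselves (not only $t_1,\dots,t_p$) are taken in the cocountable set of a.s.\ continuity times of ${\cal X}^\infty$ before extending by right-continuity — you gesture at this but should state it.

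One genuine omission: you invoke Lemma \ref{lemme5} directly, but its hypotheses include that the assertions of Lemmas \ref{lemme2} and \ref{lemme3} hold, which in general requires truncating the support of $F$ (replacing $F$ by $F_\delta$). The paper opens the proof by appealing to Lemma \ref{lemme1}, which shows that this truncation does not affect the weak limits of $({\cal X}^{(n)})$, and only then applies Lemma \ref{lemme5}. Without this reduction step your use of Lemma \ref{lemme5} is not justified for an arbitrary $F$ satisfying the hypotheses of the proposition; the fix is exactly that one sentence.
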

\begin{proof} First, by Lemma \ref{lemme1}, we can choose $\delta<x_F$ such that $F(x)=0$ for every $x\le \delta$ and such that
the assertions of Lemmas \ref{lemme2} and \ref{lemme3} hold. As a consequence, we can assume in the sequel that the conclusions of Lemma \ref{lemme5} hold.\\ 
Let $f\in{\cal C}_K^1(D_G)$. By  Lemma \ref{lemme5}, for every $n\ge
2$,
$$\sum_{k=1}^n\ES[f(X_k)-f(X_{k-1})/{\cal F}_{k-1}]=\sum_{k=2}^n\gamma_k
{\cal A}_\rho^\beta f({ X}_{k-1})+\sum_{k=2}^n\gamma_k R_{k-1}.$$
where $(R_n)_{n\ge 1}$ is an $({\cal F}_n)$-adapted sequence such that $R_n\xrightarrow{n\rightarrow+\infty}0$
in $L^1$.
Denoting by $(M_n)_{n\ge0}$ the martingale defined by
$$M_n=\sum_{k=1}^n\left(f(X_k)-f(X_{k-1})-\ES[f(X_k)-f(X_{k-1})/{\cal F}_{k-1}]\right),$$
one obtains that
$$f({ X}_n)=f({ X}_1)+\sum_{k=2}^n\gamma_k
{\cal A}_\rho^\beta f({ X}_{k-1})+\sum_{k=2}^n\gamma_k R_{k-1}+M_n.$$ 
It follows that for every $n\ge 2$,
$$f({\cal X}_t^{(n)})-f({\cal X}_0^{(n)})=M_t^{(n)}+\sum_{k=n+1}^{N(n,t)}\gamma_k({\cal A}_\rho^\beta f({ X}_{k-1})+R_{k-1}),$$
where $(M^{(n)})$ is the sequence of  martingales   defined
by $M^{(n)}_t=M_{N(n,t)}-M_{n}$. One observes that
$$\sum_{k=n+1}^{N(n,t)}\gamma_k {\cal A}_\rho^\beta f({ X}_{k-1})=\int_0^t{\cal A}_\rho^\beta f({\cal X}_s^{(n)})ds+ \big(\sum_{k=n+1}^{N(n,t)}\gamma_k
-t\big){\cal A}_\rho^\beta f({\cal X}_t^{(n)}).$$  
Hence,
\begin{equation}\label{prob}
f({\cal X}_t^{(n)})-f({\cal X}_0^{(n)})-\int_0^t{\cal A}_\rho^\beta f({\cal X}_s^{(n)})ds=M_t^{(n)}+\tilde{R}_t^{(n)},
\end{equation}
where 
$$\tilde{R}_t^{(n)}=\big(\sum_{k=n+1}^{N(n,t)}\gamma_k
-t\big){\cal A}_\rho^\beta f({\cal X}_t^{(n)})+\sum_{k=n+1}^{N(n,t)}\gamma_k R_{k-1}$$ 
First, using that $f$ belongs to ${\cal C}_K^1(D_G)$ and \eqref{repl2}, one checks that ${\cal A}_\rho^\beta$ is a bounded uniformly continuous function.
Then, using Theorem VI.1.14 of \cite{jacodshiryaev}, it follows that $H^f:\mathbb{D}(\ER_+,\ER)\rightarrow\mathbb{D}(\ER_+,\ER)$ defined by  
$$(H^f(\alpha))_t=f(\alpha_t)-f(\alpha_0)-\int_0^t {\cal A}_\rho^\beta f(\alpha_s)ds$$
is continuous for the Skorokhod topology. Therefore, if $({\cal X}^{(n_k)})$ denotes a convergent subsequence and ${\cal X}^{\infty}$ its limit, $H^f({\cal X}^{(n_k)})$  and $(M^{(n_k)}+\tilde{R}^{(n_k)})$ converge weakly to $H^f({\cal X}^\infty)$.\\
Second, by the definition of $N(n,t)$ and the fact that $(\gamma_n)$ decreases, $$\big(\sum_{k=n+1}^{N(n,t)}\gamma_k
-t\big)\le \gamma_{N(n,T)+1}\le \gamma_n.$$ 
Then, since $A_\rho^\beta f$ is bounded and $R_n\rightarrow0$ in $L^1$, we obtain that
for every positive $T$,
$$\ES[\sup_{t\in[0,T]}|\tilde{R}_t^{(n)}|]\le C\gamma_n + T\sup_{k\ge n} \ES[|R_k|]\xrightarrow{n\rightarrow+\infty}0.\quad $$
Therefore, it follows from Lemma VI.3.31 of \cite{jacodshiryaev} that $(M^{(n_k)})$ converges weakly to $H^f({\cal X}^\infty)$.
Using that $(M^{(n_k)})$ is a sequence of martingales with uniformly bounded jumps (since $f$ is bounded), we derive from Corollary IX.1.19 of \cite{jacodshiryaev} that $H^f({\cal X}^{\infty})$ is a local martingale for every $f\in{\cal C}_K^1(D_G)$. Furthermore, since $f$ and $A_\rho^\beta f$ are bounded, $H^f$ is a true martingale. Hence, ${\cal X}^\infty$ solves the martingale problem $({\cal A}_\rho^\beta,\mu,{\cal C}_K^1(D_G))$ where $\mu={\cal L}({\cal X}^\infty_0)$. Furthermore, since for every $t$,
 $({\cal X}^{(n)}_t)_{n\ge1}$ is a subsequence of $({ X}_n)_{n\ge 1}$,
 one derives from Proposition \ref{rappelgne} that ${\cal X}^\infty_t$ has distribution  $\nu_G$
  for every $t$. This involves that, on the one hand, ${\cal L}({\cal X}^{\infty}_0)=\nu_G$ and that, on the other hand,
  ${\cal L}({\cal X}^\infty_t)={\cal L}({\cal X}^\infty_0)$ for every $t\ge0$. Therefore, if existence and uniqueness hold for the martingale problem $(A_\rho^{\beta},\nu_G,{\cal C}_K^1(D_G))$, then ${\cal X}^{\infty}$ is a Markov process that is stationary since it satisfies ${\cal L}({\cal X}^{\infty}_t)={\cal L}({\cal X}_0^{\infty}).$
 \end{proof}
\section{Proof of Theorem \ref{maxprincipal}}\label{resume}
By Proposition \ref{tight}, we know that under the assumptions of Theorem \ref{maxprincipal}, $({\cal X}^{(n)})$  is tight and then admits at least one weak limit. Denoting by ${\cal X}^{\infty}$ one of these weak limits, we derive from Proposition \ref{propvalda} that   if $(a_n,b_n)$ is a sequence that belongs to ${\cal C}(F)$  such that $(\rho_n)$ and $(\beta_n)$ converge to some finite numbers $\rho$ and $\beta$, ${\cal X}^{\infty}$ solves the martingale problem $({\cal A}_\rho^\beta,\nu_G,{\cal C}_K^1(D_G))$. In particular, existence holds for this martingale problem under the preceding assumptions on the sequence $(a_n,b_n)$. Then, it  also follows from Proposition \ref{propvalda} that if moreover, uniqueness holds for this martingale problem, $({\cal X}^{(n)})$ converges weakly to ${\cal X}^{\infty}$ for the Skorokhod topology on $\mathbb{D}(\ER_+,\ER)$ and that ${\cal X}^{\infty}$ is a stationary Markov process.
Then, the reader can check that proving Theorem \ref{maxprincipal} comes now to
 show the four following assertions:
 \begin{itemize}
\item[(i)] There exists $(a_n,b_n)\in{\cal C}(F)$ such that $(\rho_n)$ and $(\beta_n)$ converge to some finite numbers $\rho$ and $\beta$.
\item[(ii)] $\rho$ and $\beta$ satisfy \eqref{rhobeta}.
\item[(iii)] If Theorem \ref{maxprincipal} holds for a particular $(a_n,b_n)\in{\cal C}(F)$, then it holds for every
$(a_n,b_n)\in{\cal C}(F)$.
\item[(iv)] Uniqueness holds for the martingale problem $({\cal A},\nu_G,{\cal C}_K^1(D_G))$ where ${\cal A}={\cal A}_{\rho_G}^{\beta_G}$.
\end{itemize}
(i) When $F$ is of type 1, this assertion is exactly Assumption $\mathbf{H_\Lambda^1}(F)$. When $F$ is of type $(2,\alpha)$,
we know that $(\theta_n,0)\in{\cal C}(F)$. With this choice, we have first that $\beta_n=0$. Second, since $1-F$ is a non-decreasing regularly varying function with index $-\alpha$, $(1/(1-F))^{\leftarrow}$ is also regularly varying with index $1/\alpha$ (see $e.g.$ \cite{resnick}, Proposition 0.8, p. 23). Then, the Karamata representation yields:
$$\theta_n=c(n)\exp\left(\int_1^{n}\frac{\rho(s)}{s}ds\right)$$
where $c(x)\xrightarrow{x\rightarrow+\infty} c>0$ and $\rho(x)\xrightarrow{x\rightarrow+\infty}1/\alpha$. Set $a_n:=c\exp(\int_1^{n}(\rho(s)/s)ds)$. Since $a_n\sim\theta_n$,
$(a_n,0)\in{\cal C}(F)$. Furthermore,
$$\frac{a_{n-1}}{a_n}=\exp\left(-\int_{n-1}^{n}\frac{\rho(s)}{s}ds\right)=1-\frac{1}{\alpha n}+o(\frac{1}{n})\quad\textnormal{as $n\rightarrow+\infty$}.$$
This involves that $\rho_n\xrightarrow{n\rightarrow+\infty}-1/\alpha$.  Finally, when $F$ is of type $(3,\alpha)$, some very close arguments lead to the existence of a sequence $(a_n,b_n)\in{\cal C}(F)$ such that $\beta_n=0$ for every $n\ge1$ and  $\rho_n\xrightarrow{n\rightarrow+\infty}1/\alpha$.\\
\noindent (ii) If $F$ is of type $(2,\alpha)$ or $(3,\alpha)$, this assertion has been shown in (i). Then, let us suppose that $F$ is of type 1.  By Proposition \ref{propvalda},   $\nu_\Lambda$ is an invariant distribution of ${\cal A}_\rho^\beta$. It follows that 
$$\int {\cal A}_\rho^\beta f(x)\nu_\Lambda(dx)=0\quad\forall f\in{\cal C}_K^1(\ER)\quad \textnormal{(see $e.g.$ \cite{ethier}).}$$
Hence, for every $f\in{\cal C}_K^1(\ER)$, $(\rho,\beta)$ is solution to 
\begin{align}
&a_1(f,\nu_\Lambda)\rho+a_2(f,\nu_\Lambda)\beta+a_3(f,\nu_\Lambda,\Phi_\Lambda)=0\quad\textnormal{with}\label{eqe}\\
&a_1(f,\nu_\Lambda)=\int xf'(x)\nu_\Lambda(dx),\quad a_2(f,\nu_\Lambda)=\int f'(x)\nu_\Lambda(dx) \quad\textnormal{and,}\nonumber\\
&a_3(f,\nu_\Lambda,\Phi_\Lambda)=\int\int_0^{+\infty}\left[f(x+y)-f(x)\right]\Phi_\Lambda(x+y)dy\nu_\Lambda(dx).\nonumber
\end{align}
By Remark \ref{remarkong}, we deduce from some particular cases (exponential and normal distributions) that $(\rho,\beta)=(0,-1)$
is a solution to this system. Then, let us show that this is the only one: the fact that $(\rho,\beta)=(0,-1)$ is a solution to \eqref{eqe} involves  in particular that   $a_3(f,\nu_\Lambda,\Phi_\Lambda)=a_2(f,\nu_\Lambda)$. Hence, Equation \eqref{eqe} can be reduced to:
\begin{equation}\label{eq2par}
a_1(f,\nu_\Lambda)\rho+a_2(f,\nu_\Lambda)(\beta+1)=0\quad\forall f\in{\cal C}_K^1(\ER).
\end{equation}
We now exhibit a function $f$ such that $a_1(f,\nu_\Lambda)\neq0$ and $a_2(f,\nu_\Lambda)=0$. For every $t\ge0$, denote by $f_t$ the function defined by:
\begin{equation*}
f_t(x)=\begin{cases}1+\cos(x)&\textnormal{if $x\in[-\pi,0]$,}\\
2&\textnormal{if $x\in[0,t]$,}\\
1+\cos(x-t)&\textnormal{if $x\in[t,t+\pi]$,}\\
0&\textnormal{otherwise.}
\end{cases}
\end{equation*}
For every $t\ge0$, $f_t$ belongs to ${\cal C}^1_K(\ER)$. First, one observes that $xf'_t(x)\le 0$ for every $x\in\ER$ and that 
$xf'_t(x)<0$ on $(-\pi,0)\cup(t,t+\pi)$. Therefore, $a_1(f_t,\nu_\Lambda)<0$ for every $t\ge0$. Second, on the one hand, one can check that 
$\Lambda'(x)>\Lambda'(-x)$ for every $x>0$. Then, using that $f_0'(x)=-f_0'(-x)$ for every $x\in\ER$, it follows that
$a_2(f_0,\nu_\Lambda)=\int f'_0(x)\Lambda'(x)dx<0.$ On the other hand, since $\Lambda'(x)\rightarrow0$ when $x\rightarrow+\infty$, we derive that $a_2(f_t,\nu_\Lambda)>0$ for sufficiently large $t$. It follows from the continuity of  $t\rightarrow a_2(f_t,\nu_\Lambda)$ that there exists $t_0>0$ such that $a_2(f_{t_0},\nu_\Lambda)=0$ (and  $a_1(f_{t_0},\nu_\Lambda)<0$). We deduce that $\rho=0$ and then, that $\beta=-1$.

\noindent (iii) Let $(a_n,b_n)_{n\ge1}\in{\cal C}(F)$ such that $({\cal X}^{(n)})$ converges in distribution on $\mathbb{D}(\ER_+,\ER)$ to ${\cal X}^{\infty}$ and consider another sequence $(\tilde{a}_n,\tilde{b}_n)_{n\ge0}\in{\cal C}(F)$. Set $\tilde{X}_n=(M_n-\tilde{b}_n)/\tilde{a}_n.$
Since $a_n\sim\tilde{a}_n$ and $b_n-\tilde{b}_n=o(a_n)$ when $n\longrightarrow+\infty$, we have:
$$X_n-\tilde{X}_n=\frac{M_n-b_n}{a_n}\left(1-\frac{a_n}{\tilde{a}_n}\right)+\frac{\tilde{b}_n-b_n}{\tilde{a}_n}=\varepsilon_1(n)X_n+\varepsilon_2(n),$$
where $\varepsilon_1(n)\xrightarrow{n\rightarrow+\infty}0$ and $\varepsilon_2(n)\xrightarrow{n\rightarrow+\infty}0$.
Then, we derive from the tightness of  $({\cal X}^{(n)})$  on  $\mathbb{D}(\ER_+,\ER)$ that, for every $\eta>0$, for every $T>0$,
$$\PE\left(\sup_{k=n}^{N(n,T)}|\varepsilon_1(k)X_k|>\eta\right)\le\PE\left(\sup_{t\in[0,T]}|{\cal X}^{(n)}_t|>\frac{\eta}{\underset{k\ge n}{\inf}|\varepsilon_1(k)|}\right)\xrightarrow{n\rightarrow+\infty}0.$$
Therefore, denoting by $\tilde{\cal X}^{(n)}$ the c\`adl\`ag process defined on $\ER_+$ by $\tilde{\cal X}^{(n)}_t=\tilde{X}_{N(n,t)}$, it follows that for every $\eta>0$, for every $T>0$,
$$\PE(\sup_{t\in[0,T]|}|{\cal X}^{(n)}_t-\tilde{\cal X}^{(n)}_t|>\eta)\xrightarrow{n\rightarrow+\infty}0.$$
Hence, $(\tilde{\cal X}^{(n)})$ converges weakly to ${\cal X}^\infty$ (see $e.g.$ \cite{jacodshiryaev}, Lemma 3.31, p. 352).\\
(iv) First, by a change of variable, one obtains:
$${\cal A}f(x)=(\rho_G
x+\beta_G)f'(x)+\int_0^{+\infty}\Big(f(x+(z-x)_+)-f(x)\Big)\varphi_G(z)dz. $$
Then, ${\cal A}$ can be viewed as the infinitesimal generator associated with the SDE:
\begin{equation}\label{EDS}
dX_t=(\rho_G X_{t^-}+\beta_G)dt+\int (z-X_{t^-})_+\varphi_G(z)N(dt,dz),
\end{equation}
where $N$ is a Poisson random measure with intensity measure $dt\otimes\lambda(dz)$. More precisely, if $(X_t)$ is a solution to
\eqref{EDS} with initial value $X_0$, one can check that $(X_t)$ is a solution to the martingale problem $({\cal A},\mu)$ where $\mu={\cal L}(X_0)$. Now, by Lepeltier and Marchal (\cite{lepeltier}, Corollary II.10 and Theorem II.13), uniqueness for the martingale problem $(A,\delta_{x_0})$ holds if pathwise uniqueness holds for SDE \eqref{EDS} with initial value $x_0$. Therefore, if we want to prove uniqueness for the martingale problem $(A,\nu_G)$, we need only to show that for every $x_0\in\textnormal{supp}(\nu_G)$, pathwise uniqueness holds for the SDE with initial value $x_0$.  \\
Assume first that $F$ is of type 1. Since $u\rightarrow(u)_+$ is a Lipschitz continuous function which is null on $\ER_-$, one obtains that for every $N>0$,
for every $x,y\in[-N,N]$, 
$$\int \left[(z-x)_+-(z-y)_+\right]^2(\varphi_G(z))^2dz\le C|x-y|^2\int_{-N}^{+\infty}\exp(-2z)dz\le C_N|x-y|^2.$$
Therefore, pathwise uniqueness for \eqref{EDS} follows for every $x_0\in\ER$ from \cite{lepeltier} (Theorem III.3).\\
Second, suppose that  $F$ is of type $(2,\alpha)$. On the one hand, $x\rightarrow\rho_G x+\beta_G:=-\alpha^{-1}x$ is a Lipschitz continuous function. On the other hand, for every positive $\varepsilon$ and $N$ such that $0<\varepsilon<N$, for every $x,y\in[\varepsilon,N]$, 
$$\int \left[(z-x)_+-(z-y)_+\right]^2(\varphi_G(z))^2dz\le C|x-y|^2\int_{\varepsilon}^{N}\frac{\alpha}{z^{1+\alpha}}dz\le C_{\varepsilon,N}|x-y|^2.$$
Now, since there are only positive jumps, any solution of \eqref{EDS} with initial value $x_0\in\textnormal{supp}(\nu_G)=(0,+\infty)$ is a positive process.
Therefore, pathwise uniqueness follows again from Theorem III.3 of \cite{lepeltier}. \\
Finally, we assume that $F$ is of type $(3,\alpha)$.  Using that $X_{t^-}+(z-X_{t^-})_+\le z$ and that $\varphi_G(z)=0$ when $z>0$, a similar  argument  yields the result in this case. 


\begin{thebibliography}{99}
\bibitem{basak}
Basak G.K., Hu I.C., Wei C.Z. (1997), Weak convergence of
recursions, \textit{Stochastic Process.  Appl.}, \textbf{68}, pp.
65-82.
\bibitem{billingsley}
Billingsley, P. (1968)
\textit{Convergence of Probability Measures}, Wiley.
\bibitem{ethier}
Ethier S., Kurtz T. (1986), \textit{Markov processes,
characterization and convergence}, Wyley series in probability and
mathematical statistics, Wiley, New York.
\bibitem{fahrner}
Fahrner, I., Stadtmüller, U. (1998), On almost sure max-limit theorems, \textit{Statist. Probab. Lett.}, \textbf{37}, pp. 229-236.
\bibitem{gnedenko}
Gnedenko B.V. (1943), Sur la distribution limite du terme maximum
d'une s\'erie al\'eatoire, \textit{Ann. Math.}, \textbf{44}, pp.
423-453.
\bibitem{jacodshiryaev}
Jacod, J., Shiryaev, A. N. (1987), \textit{Limit Theorems for
Stochastic Processes}, Springer.
\bibitem{LP1}
Lamberton D., Pag\`es G. (2002), Recursive computation of the
invariant distribution of a diffusion, \textit{Bernoulli},
\textbf{8}, pp. 367-405.
\bibitem{LP2}
Lamberton D., Pag\`es G. (2003), Recursive computation of the
invariant distribution of a diffusion: the case of a weakly mean
reverting drift, \textit{Stoch. Dynamics}, \textbf{4}, pp.
435-451.
\bibitem{lamperti}
Lamperti J. (1964), On extreme order statistics, \textit{Ann.
Math. Statist.}, \textbf{35}, pp. 1726-1737.
\bibitem{leadbetter}
Leadbetter M., Lindgren G., Rootzen H. (1983), \textit{Extremes
and Related Properties of Random Sequences and Processes},
Springer, New York.
\bibitem{lemaire}
Lemaire V. (2006), An adaptative scheme for the approximation of
dissipative systems, \textit{Stochastic Process. Appl.}, 117, pp. 1491-1518.
\bibitem{lepeltier}
Lepeltier J.P., Marchal B. (1976), Probl\`eme de martingales et \'equations diff\'erentielles stochastiques associ\'ees \`a un op\'erateur int\'egro-diff\'erentiel (French), \textit{Annales de l'I.H.P}, \textbf{12}, pp. 43-103.
\bibitem{panloup}
Panloup F. (2008), Recursive computation of the invariant measure
of a {SDE} driven by a \textsc{L}\'evy process, \textit{Annals Appl. Probab.},
\textbf{18}, pp. 379-426.
\bibitem{resnick2}
Resnick, S. (1975), Weak convergence to extremal processes,
\textit{Ann. Probab.}, \textbf{3}, pp. 951-960.
\bibitem{resnick}
Resnick, S. (1987), \textit{Extreme Values, Regular Variation and
Point Processes,} Springer, New York. 
\end{thebibliography}
\end{document}